\newtheorem{theorem}{Theorem}[section]
\theoremstyle{plain}
\newtheorem{construction}{Construction}
\newtheorem{lemma}[theorem]{Lemma}
\numberwithin{equation}{section}
\newcommand{\SUT}{\mbox{ such that }}
\newcommand{\OR}{\mbox{ or }}
\newcommand{\AND}{\mbox{ and }}
\newcommand{\IFF}{\mbox{ if and only if }}
\newcommand{\nin}{\not\in}
\newcommand{\ST}{\mathcal{T}}
\newcommand{\SV}{\mathcal{V}}
\newcommand{\SE}{\mathcal{E}}
\newcommand{\SA}{\mathcal{A}}
\newcommand{\SM}{\mathcal{M}}
\newcommand{\SN}{\mathcal{N}}
\newcommand{\SG}{\mathcal{G}}
\newcommand{\restrict}{\upharpoonright}
\newcommand{\showpic}[4]{
\begin{figure}[h]
\begin{center}
\includegraphics[width=#1in]{#2.pdf}
\caption{#3}
\label{#4}
\end{center}
\end{figure}
}
\newcommand{\N}{\mathbb{N}}
\newcommand{\ec}[1]{\overline{#1}}
\newcommand{\dom}[2]{{#1} \rightarrow {#2}}
\newcommand{\ndom}[2]{{#1} \not \rightarrow {#2}}
\newcommand{\ecsig}[2]{\Sigma(#1,#2)}
\newcommand{\sig}[2]{\ecsig{#1}{#2}}
\newcommand{\tc}[1]{\widehat{#1}} 
\newcommand{\tr}[1]{\Upsilon({#1})} 
\newcommand{\brt}[1]{\mathbf{1}_{#1}} 
\newcommand{\rt}[1]{1_{#1}} 
\newcommand{\pn}[2]{[#1]_{#2}}
\newcommand{\pnbu}[1]{\pn{\brt{U}}{#1}}
\newcommand{\pnbv}[1]{\pn{\brt{V}}{#1}}
\newcommand{\startthing}[1]{\begin{#1}}
\newcommand{\stopthing}[1]{\end{#1}}
\newcommand{\bprf}{\startthing{proof}}
\newcommand{\eprf}{\stopthing{proof}}
\newcommand{\blem}{\startthing{lemma}}
\newcommand{\elem}{\stopthing{lemma}}
\newcommand{\bitem}{\begin{itemize}}
\newcommand{\eitem}{\end{itemize}}
\newcommand{\benum}{\begin{enumerate}}
\newcommand{\eenum}{\end{enumerate}}
\newcommand{\bques}{\begin{question}}
\newcommand{\eques}{\end{question}}
\newcommand{\define}[1]{\emph{#1}}
\title{Gallai Multigraphs}
\author[K. Pula]{Kyle Pula}
\email[Pula]{jpula@math.du.edu}
\address{Department of Mathematics \\
University of Denver \\
2360 S Gaylord St \\
Denver, CO 80208, U.S.A.}
\begin{document}
\maketitle

\begin{abstract}
A complete edge-colored graph or multigraph is called Gallai if it lacks rainbow triangles. We give a construction of all finite Gallai multigraphs.
\end{abstract}

\section{Background}

A complete, edge-colored graph without loops lacking rainbow triangles is called \define{Gallai} after Tibor Gallai, who gave an iterative construction of all finite graphs of this sort \cite{gallai}. Some work progress has been made on the more general problem of understanding edge-colored graphs lacking rainbow $n$-cycles for a fixed $n$. In particular, Ball, Pultr, and Vojt\u{e}chovsk\'y give algebraic results about the sequence $(n : G \mbox{ lacks rainbow $n$-cycles} )$ as a monoid \cite{ball}, and Vojt\u{e}chovsk\'y extends the work of Alexeev \cite{alexeev} to find the densest arithmetic progression contained in this sequence \cite{petr}. 

While searching for a general construction of graphs lacking rainbow $n$-cycles, we were confronted with the task of understanding a different generalization. We call a complete, simple, edge-colored multigraph \define{Gallai} if it lacks rainbow triangles. (By complete here we mean only that each pair of vertices is connected by at least one edge.) Mubayi and Diwan make a conjecture about the possible color densities in Gallai multigraphs having at most three colors \cite{Mubayi}. 

The main result of this paper is a construction of all finite Gallai multigraphs.

\subsection{Basic Notation}

We denote vertices using lowercase letters such as $u,v, \AND w$, sets of vertices using uppercase letters such as $U,V, \AND W$, and colors using uppercase letters such as $A,B, \AND C$.  Given two sets of vertices, $U$ and $V$, we write $UV$ for the set of edges connecting vertices of $U$ to vertices of $V$.  This notation will also be used with singletons, $u$ and $v$, to refer to the edges connecting $u$ and $v$.  To denote the set of colors present in a set of edges, say $UV$, we write $\ec{UV}$ when there is no risk of ambiguity. Otherwise we refer explicitly to the coloring at hand, i.e. $\rho[UV]$. If $\ec{UV} = \{A\}$, we will often shorten notation by writing $\ec{UV} = A$. If the edges of an $n$-cycle contain no repeated colors, we say it is \define{rainbow}.

Many of our results will be stated in terms of mixed graphs. A \define{mixed graph} is a triple $M = (V,E,A)$ with vertices $V$, undirected edges $E$, and directed edges $A$. We say $M$ is \define{complete} if every pair of distinct vertices is connected by a single directed or undirected edge. The \define{weak components} of a directed graph are the components of the graph that results from replacing each directed edge with an undirected edge. For our purposes, the \define{weak components} of a mixed graph $M = (V,E,A)$ will be the weak components of the directed graph $(V,A)$. Note that this notion of component disregards undirected edges.

We use the term \define{rooted tree} to refer to a directed graph that is transitive and whose transitive reduction forms a tree in the usual sense. If $(V, A)$ is a rooted tree, then its \define{root}, written $\rt{V}$, is the unique vertex having the property that there is a directed edge from $\rt{V}$ to every other vertex in $V$.

\subsection{Construction of Gallai Graphs}

It is easy to see that the following construction yields Gallai graphs.

Let $(G = (V, E), \rho)$ be a complete, edge-colored graph such that $|\rho(VV)| \leq 2$. For every $v \in V$, let $(G_v = (V_v, E_v), \rho_v)$ be a Gallai graph. Construct a new complete graph on $\bigcup_{v \in V}V_v$ with edge-coloring $\rho'$ defined by
\begin{align*}
\rho'(xy) := 
\left\{\begin{array}{lrl}
\rho_v(xy) & \mbox{if } & x, y \in V_v \\
\rho(vw) & \mbox{if } & x \in V_v, y \in V_w, \AND v \neq w \end{array}\right. .
\end{align*}

Gallai showed \cite{gallai} that every finite Gallai graph can be built iteratively by the above construction, i.e. for every Gallai graph, $G = (V,E)$,  there exists a nontrivial partition $V = \cup V_i$ such that $|\ec{V_i V_j}| = 1$ for $i \neq j$ and $|\cup_{i \neq j} \ec{V_i V_j}| \leq 2$. 

\section{Decomposition of Gallai Multigraphs}\label{SEC:DECOMP}

While our purpose is to give a construction of Gallai multigraphs, the most substantive step is in developing the appropriate decomposition. Before stating this result, we describe our basic techniques and introduce a few definitions.

\subsection{Basic Techniques: Maximality and Dominance}

Let $(G = (V,E), \rho)$ be a Gallai multigraph.  We will in all cases assume that distinct edges connecting the same vertices are colored distinctly. We also think of $V \subseteq \N$ and thus having a natural ordering. We say that $(G = (V,E), \rho)$ is \define{uniformly colored} if $\rho(e_1) = \rho(e_2)$ for all $e_i \in E$.

We call $uv$ \define{isolated} if for every $w \nin \{u,v\}$, $\ec{uw} = \ec{vw}$ and $|\ec{uw}| = 1$. Notice that if $uv$ is isolated we can reduce the multigraph by collapsing the edge(s) $uv$. Likewise, given any multigraph, we can arbitrarily introduce new isolated edges without introducing rainbow triangles. We therefore call a multigraph \define{reduced} if it contains no isolated edges.

We call $uv$ \define{maximal} if no new color can be added to $\ec{uv}$ without introducing a rainbow triangle. Here we allow the possibility that $uv$ has ``all possible colors'' and thus is maximal. Likewise, $(G = (V,E), \rho)$ is \define{maximal} if $uv$ is maximal for all $u,v \in V$.

Let $(G = (V,E), \rho)$ be a maximal Gallai multigraph. For $u,v \in V$, notice that $|\ec{uv}| \geq 3 \IFF uv$ is isolated. Therefore, if $G$ is reduced, $|\ec{uv}| = 1 \OR 2$ for all $u,v \in V$. Furthermore, if $G$ is not reduced, we can reach a reduced Gallai multigraph by successively collapsing isolated edges of $G$.

To construct all Gallai multigraphs, it therefore suffices to construct the reduced maximal ones. In Section \ref{SEC:DECOMP}, we develop a basic decomposition of any maximal reduced Gallai multigraph and then in Section \ref{SEC:CONSTRUCTION} reverse this decomposition to construct all finite reduced Gallai multigraphs.

\blem\label{L:MAX}
Suppose $(G = (V,E), \rho)$ is a maximal Gallai multigraph.  If $u,v \in V$ and $A \in \ec{uv}$, then for all $B \nin \ec{UV}$, there is $w \in V \setminus \{u,v\}$ and $C \nin \{A,B\}$ such that either $A \in \ec{uw} \AND C \in \ec{wv} \OR C \in \ec{uw} \AND A \in \ec{wv}$.
\elem

\bprf
Since $G$ is maximal and $B \nin \ec{uv}$, we can find $w \neq u,v$ such that $u,v,w$ would form a rainbow triangle if $B$ were to be added to $\ec{uv}$.  Thus we may find $X \in \ec{uw} \AND Y \in \ec{vw}$ such that $X, Y, \AND B$ are distinct.  However, since $A \in \ec{uv}$, $|\{X, Y, A\}| \leq 2$ and thus $A = X$ or $A = Y$.  Let $C$ be the other color.
\eprf

While Theorem \ref{T:BASECASE} will follow from Theorem \ref{T:DECOMP}, our general decomposition result, we present it separately here because of its importance in understanding the most basic structure of a maximal reduced Gallai multigraph.

\begin{theorem}\label{T:BASECASE}
The vertices of a reduced maximal Gallai multigraph that are connected by two edges form uniformly colored cliques.
\end{theorem}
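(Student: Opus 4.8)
The plan is to reduce the theorem to a purely local statement about triangles, and then prove that statement in two stages: the first using only the absence of rainbow triangles, the second using maximality via Lemma~\ref{L:MAX}. Call two distinct vertices $u,v$ \emph{doubled} when $|\ec{uv}| = 2$; since $G$ is maximal and reduced, every pair is joined by one or two edges, so this is the relation named in the statement. The key claim is: if $u,v,w$ are distinct and both $uv$ and $vw$ are doubled, then $\ec{uv} = \ec{vw} = \ec{uw}$ (so $uw$ is doubled too). Granting this, the theorem follows quickly. Transitivity is immediate, so each connected component of the doubled relation induces a clique every pair of which is doubled; and since any two doubled pairs sharing a vertex carry the same two-element color set by the claim (and in a clique on at least three vertices any two pairs are linked through a common vertex, while a two-vertex component is trivial), each such clique has a single color pair $\{A,B\}$ occurring on every pair. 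That is the sense in which it is a uniformly colored clique.

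First stage. Write $\ec{uv} = \{A,B\}$. I first show $\ec{uw}\subseteq\{A,B\}$: if some $X\in\ec{uw}$ lay outside $\{A,B\}$, then colouring $uw$ with $X$ and $uv$ with $A$ and applying the no-rainbow-triangle condition to $u,v,w$ forces every colour of $vw$ into $\{A,X\}$, while colouring $uv$ with $B$ forces it into $\{B,X\}$; hence $\ec{vw}=\{X\}$, contradicting $|\ec{vw}|=2$. Now pick $X\in\ec{uw}$; say $X=A$. Colouring $uw$ with $A$ and $uv$ with $B$ and applying no-rainbow-triangle to $u,v,w$ forces $\ec{vw}\subseteq\{A,B\}$, hence $\ec{vw}=\{A,B\}=\ec{uv}$.

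Second stage, which I expect to be the main obstacle. At this point I still only know $\ec{uw}\subseteq\{A,B\}$, so I must rule out $\ec{uw}=\{A\}$ (the case $\ec{uw}=\{B\}$ being symmetric under exchanging $A$ and $B$). Since $uw$ is maximal and $B\nin\ec{uw}$, Lemma~\ref{L:MAX} supplies a vertex $x\nin\{u,w\}$ and a colour $C\nin\{A,B\}$ with either $A\in\ec{ux}$ and $C\in\ec{wx}$, or $C\in\ec{ux}$ and $A\in\ec{wx}$. In the first configuration, applying no-rainbow-triangle to $v,w,x$ — using $\ec{vw}=\{A,B\}$ and $C\in\ec{wx}$ — pins $\ec{vx}$ down to $\{C\}$; then the triangle $u,v,x$ with $uv$ coloured $B$, $vx$ coloured $C$, and $ux$ coloured $A$ is rainbow, a contradiction. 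The second configuration is handled by exchanging the roles of $u$ and $w$ (legitimate since $\ec{uv}=\ec{vw}$ and the standing hypothesis $\ec{uw}=\{A\}$ is symmetric in $u,w$), this time producing a rainbow triangle on $v,w,x$. Hence $\ec{uw}=\{A,B\}$, which finishes the claim and the theorem.

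The delicate points I anticipate are entirely in the second stage: invoking Lemma~\ref{L:MAX} against the correct non-edge colour, and keeping straight which of the two symmetric configurations has arisen when extracting the final rainbow triangle. The first stage and the passage from the local claim to the global clique structure should be routine.
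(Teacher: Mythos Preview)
Your proof is correct and follows essentially the same route as the paper: first use the absence of rainbow triangles to force $\ec{uv}=\ec{vw}=\{A,B\}$ and $\ec{uw}\subseteq\{A,B\}$, then invoke Lemma~\ref{L:MAX} on the missing color $B$ to produce the witness $x$ and derive a contradiction. Your second stage is in fact more explicit than the paper's---the paper asserts directly that $v,x,w$ is rainbow, whereas you correctly first pin down $\ec{vx}=\{C\}$ via $v,w,x$ and then exhibit the rainbow triangle on $u,v,x$; you also spell out the passage from the local triangle claim to the global clique statement, which the paper leaves implicit.
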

\begin{proof}
Let $(G = (V,E), \rho)$ be a reduced maximal Gallai multigraph. Let $u,v,w \in V$. Suppose $\ec{uv} = \{A,B\} \AND \ec{vw} = \{C,D\}$. If $\{A,B\} \neq \{C,D\}$, then we find a rainbow triangle no matter the colors of $\ec{uw}$. Suppose then that $\ec{uv} = \ec{vw} = \{A,B\}$. Certainly $\ec{uw} \subseteq \{A,B\}$. Suppose $\ec{uw} = A$. Then by Lemma \ref{L:MAX}, we may find $x \in V \setminus \{u,w\}$ such that, without loss of generality, $A \in \ec{ux} \AND C \in \ec{wx}$. Then $v,x,w$ contains a rainbow triangle.
\end{proof}

Theorem \ref{T:DECOMP} is primarily an explanation of how each of these uniformly colored cliques are related to each other, and the following relation on sets of vertices plays a central role in this analysis. Let $(G = (\SV,\SE), \rho)$ be a Gallai multigraph.  For $U, V \subseteq \SV$ disjoint, we say that $U$ \define{dominates} $V$ and write $\dom{U}{V}$ iff $|\ec{UV}| > 1$ and
\benum
\item $U = \{u\}, V = \{v\}$ and $u < v$ or 
\item $|U| > 1 \OR |V| > 1$ and for every $u \in U \AND v \in V$, $\ec{uv} = \ec{uV}$.
\eenum
Given $U, V \subseteq \SV$, we write $\Sigma(U,V)$ for the map from $U$ to the powerset of $\ec{UV}$ defined by $u \mapsto \ec{uV}$. When $\dom{U}{V}$, $\Sigma(U,V)$ completely describes the relationship between $U$ and $V$ and we call it the \define{signature of $\dom{U}{V}$}.

Given a reduced maximal Gallai multigraph $(G = (\SV,\SE), \rho)$, we will describe its structure through a sequence of edge-colored mixed graphs $M_n(G) = (\SV_n, \SE_n, \SA_n)$ defined as follows:
\benum
\item $\SV_0 := \SV$,
\item $\SA_0 := \{(u,v) \in \SV^2 : \dom{u}{v} \}$, and
\item $\SE_0 := \{\{u,v\} \in [\SV]^2 : |\rho[uv]| = 1 \}$,
\eenum
and for $n \geq 1$
\benum
\item[($1^\prime$)] $\SV_n$ is the partition of $\SV$ induced by the weak components of $M_{n-1}(G)$,
\item[($2^\prime$)] $\SA_n := \{(U,V) \in \SV_n ^2 : \dom{U}{V} \}$, and
\item[($3^\prime$)] $\SE_n := \{\{U,V\} \in [\SV_n]^2 : |\rho[UV]| = 1 \}$.
\eenum

For each $n$, $\rho$ induces a list-coloring, $\rho'$, of $\SE_n \cup \SA_n$ by $\rho'(e) = \rho[UV]$ where $e = (U,V)$ or $e = \{U,V\}$. Likewise, $\Sigma$ induces a partition of $\SA_n$ by $(U_1, V_1) \sim_\Sigma (U_2, V_2) \IFF \Sigma(U_1, V_1) = \Sigma(U_2, V_2)$. Note that $(U_1, V_1) \sim_\Sigma (U_2, V_2) \IFF U_1 = U_2 \AND \rho[uV_1] = \rho[uV_2]$ for all $u \in U_1$.

Figure (1) shows an example of this sequence for a particular Gallai multigraph. For readability, we show only those edges in $M_n(G)$ that contribute to the formation of directed edges in $M_{n+1}(G)$. The hash marks on the directed edges in $M_1(G)$ indicate whether the signatures agree or disagree.

\showpic{4.7}{Example_Decomp}{Sequence of $M_n(G)$ for a Gallai multigraph.}{Example_Decomp}

\subsection{Decomposition of Maximal Gallai Multigraphs}\label{SEC:DECOMP}

We may now state our main result.
\begin{theorem}\label{T:DECOMP}
Let $G$ be a reduced maximal Gallai multigraph, $H$ an induced subgraph of $G$, and $M_n(H) = (\SV_n, \SE_n, \SA_n)$ the sequence described above. Then
\benum
\item $M_n(H)$ is complete,
\item
$|\rho'(e)| = \left\{\begin{array}{lll}
1 & \mbox{if } & e \in \SE_n \\
2 & \mbox{if } & e \in \SA_n
\end{array}\right.$,
\item the weak components of $M_n(H)$ are rooted trees, and
\item if $(U,V), (V,W) \in \SA_n,$ then $(U,V) \sim_\Sigma (U,W)$
\eenum
for all $n \geq 0$.
\end{theorem}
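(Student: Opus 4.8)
The plan is to induct on $n$. Since $G$ is reduced and maximal, $|\ec{uv}|\le 2$ for every pair of vertices (by the remark before Lemma~\ref{L:MAX}); as passing to an induced subgraph changes no color set, $H$ inherits this bound, and it also inherits --- from the proof of Theorem~\ref{T:BASECASE}, which uses only the maximality of $G$ --- the fact that ``joined by two edges'' is a transitive relation on the vertices of $H$ whose classes are uniformly colored cliques. For the base case $n=0$: each pair has $|\ec{uv}|\in\{1,2\}$, contributing an edge to $\SE_0$ in the first case and, by clause~(1) of the definition of dominance, a directed edge to $\SA_0$ in the second; this gives (1) and (2). The weak components of $(\SV_0,\SA_0)$ are exactly the uniformly colored $2$-edge cliques together with the vertices joined to all others by a single color; on a $2$-edge clique $\SA_0$ is the transitive tournament induced by the natural ordering, which is transitive with a Hamiltonian-path transitive reduction, hence a rooted tree rooted at the least element --- this is (3). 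Finally (4) at level $0$ asserts that $u<v$, $v<w$, and $|\ec{uv}|=|\ec{vw}|=2$ imply $(u,w)\in\SA_0$ with $\ec{uw}=\ec{uv}$, which is immediate from the clique structure.

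For the inductive step, assume (1)--(4) for $M_{n-1}(H)$ and fix distinct weak components $W,W'$ of $M_{n-1}(H)$, so $W,W'\in\SV_n$. The key observation is that if $U\subseteq W$ and $U'\subseteq W'$ are parts of $\SV_{n-1}$, then $U$ and $U'$ lie in distinct weak components of $M_{n-1}(H)$, hence by (1) are joined by an undirected edge, hence by (2) by a single color; so $\ec{uu'}$ for $u\in W$, $u'\in W'$ is a single color depending only on the $\SV_{n-1}$-parts containing $u$ and $u'$. Feeding this into a rainbow-triangle analysis --- a triangle through parts $U_1,U_2\subseteq W$ and $U'\subseteq W'$ constrains the single colors $\ec{U_1U'},\ec{U_2U'}$ by the one- or two-color set $\ec{U_1U_2}$, according to whether $U_1,U_2$ are dominance-related inside the rooted tree $W$ --- and propagating agreements along $W$ and $W'$ via the coherence statement (4) for $M_{n-1}$, one shows that $\ec{WW'}$ has at most two colors and, moreover, that $\ec{WW'}$ is a single color (whence $\{W,W'\}\in\SE_n$), or $\ec{uu'}$ depends only on $u$ and two colors occur (whence $W\to W'$), or symmetrically $W'\to W$. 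In every case the pair $W,W'$ is joined by an edge of $\SE_n\cup\SA_n$, so $M_n(H)$ is complete, which is (1); and (2) follows, while the analysis also records $\Sigma(W,W')$ for each new dominance edge.

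With (1) and (2) in hand I would prove (4) for $M_n(H)$ together with transitivity of the new dominance relation: given $W\to W'$ and $W'\to W''$, the definition makes $\ec{ww'}$ depend only on $w$ and $\ec{w'w''}$ only on $w'$, and a rainbow-triangle argument on triples $w,w',w''$ forces $\ec{ww''}$ to depend only on $w$ and to equal $\ec{ww'}$, yielding $W\to W''$ and $(W,W')\sim_\Sigma(W,W'')$. For (3), the new dominance relation is antisymmetric --- mutual dominance would make $\ec{ww'}$ independent of both $w$ and $w'$, hence $\ec{WW'}$ monochromatic, contradicting $|\ec{WW'}|=2$ --- and, by the preceding point, transitive on each weak component, so it restricts there to a strict partial order; it then remains to exclude an undirected cycle in the associated Hasse diagram, that is, incomparable parts $W_1,W_2$ with a common lower cover and a common upper cover. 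Such a configuration would force the surrounding interface colors to coincide in a pattern that, together with the classification above and coherence, yields a rainbow triangle. Hence every weak component of $M_n(H)$ is a rooted tree, completing the induction.

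The step I expect to be the main obstacle is the interface classification in the inductive step: marshalling the rainbow-triangle constraints between the two rooted trees $W$ and $W'$ so that they collapse into exactly the three listed patterns, and in particular using hypothesis (4) for $M_{n-1}$ to show that a constraint between a single pair of $\SV_{n-1}$-parts propagates across the whole interface. The interplay between the two-color (dominance) and one-color (undirected) edges inside $W$ and $W'$, and the exclusion of Hasse-diagram cycles needed for (3), are the other delicate points; the antisymmetry and transitivity arguments are comparatively routine once the classification is available.
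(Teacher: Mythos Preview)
Your induction architecture and the base case are fine, but the inductive step has a real gap: you rely solely on rainbow-triangle constraints among vertices of $W$, $W'$ (and later $W''$), and those constraints are vacuous in the critical case $\tc{W}=\tc{W'}=\{A,B\}$ with $\ec{WW'}\subseteq\{A,B\}$. Every triple you can form then uses only the two colors $A$ and $B$, so no rainbow triangle is threatened and nothing is forced. Concretely, take $W=\{u_1,u_2\}$ and $W'=\{v_1,v_2\}$, each an $\{A,B\}$-clique, with $\ec{u_1v_1}=\ec{u_1v_2}=\ec{u_2v_2}=A$ and $\ec{u_2v_1}=B$: this is Gallai, $|\ec{WW'}|=2$, yet neither $\dom{W}{W'}$ nor $\dom{W'}{W}$. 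Your interface classification, your transitivity argument when $\tc{W}=\tc{W'}$, and your exclusion of Hasse cycles all break on exactly this two-color degeneracy.

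What your plan is missing is the use of \emph{maximality of the ambient $G$} in the inductive step, not just at $n=0$. The paper's engine here is Lemma~\ref{L:DOM_LEMMA}: via Lemma~\ref{L:MAX} it produces a witness vertex $x\in\SV\setminus(U\cup V)$ --- possibly outside $H$ altogether --- carrying a third color $C\notin\{A,B\}$, and this external $x$ is what breaks the symmetry and pins down the dominance direction (and, in Lemmas~\ref{L:PN_IS_DOM_TRANSITIVE} and~\ref{L:PN_IS_DOM_NONINTERSECTING}, what drives transitivity and the rooted-tree property in the equal-color case). This is precisely why the theorem is stated for $H$ an induced subgraph of a reduced maximal $G$ rather than for arbitrary Gallai multigraphs: the witness lives in $G$, and Lemmas~\ref{L:3RD_COLOR_WOUTSIDE} and~\ref{L:DOM_LEMMA} are built to exploit it. You need an analogue of this external-witness step; purely internal rainbow-triangle bookkeeping together with coherence at level $n-1$ cannot complete the interface classification you correctly identify as the crux.
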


For convenience, if $M_k(H)$ has properties (1)-(4) for all $k \leq n$, we will say that $H$ has the \define{tree property} for $n$.

As we are primarily interested in decomposing reduced maximal Gallai multigraphs, our most important application of Theorem \ref{T:DECOMP} is when $H = G$. We will, however, need the result in this greater generality in a key technical step in Section \ref{MAKING_TREES}.  

\subsection{Proof of Theorem \ref{T:DECOMP}}

Throughout this section, we assume $(G = (\SV,\SE), \rho)$ is a reduced maximal Gallai multigraph and $H$ is an induced subgraph of $G$.

\blem \label{L:ABC}
Suppose $U, V, W \subseteq \SV$ disjoint, $\dom{U}{V}$, and $\{A,B\} = \ec{UV}$.
\begin{enumerate}
\item If $\ec{UW} = C \notin \{A,B\}$, then $\ec{V W} = C$.
\item If $\ec{V W} = C \notin \{A,B\}$, then either $C \in \ec{U W}$ or $\dom{U}{W}$ and $\Sigma(U, V) = \Sigma(U, W)$.  If we also know that either $\dom{U}{W}, \dom{W}{U}, \OR |\ec{UW}| = 1$ and that $U$ always dominates with the same colors (i.e., whenever $\dom{U}{U^\prime}$, then $\ec{U U^\prime} = \{A,B\}$), then either $\ec{UW} = C$ or $\dom{U}{W}$ and $\Sigma(U, V) = \Sigma(U, W)$.

\item[$(2^\prime)$] If $W$ is a single vertex, we need only require $C \in \ec{V W}$ in $(2)$.
\end{enumerate}
\elem
\bprf
$(1)$ Fix $v \in V$ and $w \in W$.  Since $\dom{U}{V}$, we may select $u_A, u_B \in U$ such that $A \in \ec{u_A v}$ and $B \in \ec{u_B v}$.  Observe that the triangle $w, u_A, v$ forces $\ec{v w} \subseteq \{A, C\}$ while $w, u_B, v$ forces $\ec{v w} \subseteq \{B, C\}$.  Thus $\ec{v w} = C$.  Since $v$ and $w$ were arbitrary, $\ec{V W} = C$.

$(2)$ Fix $u \in U, w \in W, v \in V$. Since $\ec{UV} = \{A,B\}$. We are in one of the following cases: $\ec{uv}=A$, $\ec{uv}=B$, or $\ec{uv}=\{A,B\}$. If $\ec{uv} = \{A,B\}$, then $\ec{vw} = C$ forces $\ec{uw} = C$. Suppose then that $C \nin \ec{UW}$. If $\ec{uv} = A$, then $\ec{uw} \subseteq \{A,C\}$ and thus $\ec{uw} = A$. Likewise, if $\ec{uv} = B$, then $\ec{uw} = B$. We thus have either $C \in \ec{UW} \OR \dom{U}{W}$ and $\Sigma(U,V) = \Sigma(U,W)$.

Suppose we also know that we are in one of the following cases:
\benum
\item[(i)] $\dom{U}{W} \AND \ec{UW} = \{A,B\}$,
\item[(ii)] $\dom{W}{U}$, or
\item[(iii)] $|\ec{UW}| = 1$.
\eenum

Again, if $C \nin \ec{UW}$, then we must be in case (i). We would like to conclude that if $C \in \ec{UW}$, then we are in case (iii) and thus $\ec{UW} = C$. Suppose $\dom{W}{U}$. To avoid a rainbow triangle, $\ec{UW} \subseteq \{A,B,C\}$. In particular, since $|\ec{UW}| \geq 2$, $A \in \ec{UW} \OR B \in \ec{UW}$. Assume $A \in \ec{UW}$ and fix $w \in W$ such that $A \in \ec{wU}$. We may then choose $u \in U \AND v \in V$ such that $B \in \ec{uv}$ but now $u,v,w$ is a rainbow triangle. Thus $\ndom{W}{U}$ and $\ec{UW} = C$.

$(2^\prime)$ Let $W = \{w\}$ and $V = \{v\}$ and repeat the proof of (2).
\eprf

\blem\label{L:BASECASE}
$H$ has the tree property for $0$.
\elem
\bprf
It is clear that $M_0(H)$ is complete. The rest of the claim is essentially a restatement of Theorem \ref{T:BASECASE}. By the definition of dominance between single vertices, each complete, uniformly colored clique from Theorem \ref{T:BASECASE} becomes a linear ordered set of vertices and thus a rooted tree. In this context, property (4) of Theorem \ref{T:DECOMP} is simply the observation that these cliques are uniformly colored.
\eprf

Before proceeding, we introduce some convenient notation. Elements of $\SV_n$ are by definition subsets of $V(H)$.  We will however at times want to speak of their structure as rooted trees. For $U \in \SV_n$, we write $\tr{U}$ to refer to the set of elements of $\SV_{n-1}$ contained in $U$ and $\rt{U}$ to refer to the root of $\tr{U}$.  Notice that $\rt{U} \in \SV_{n-1}$ has its own tree structure and thus we may refer to $\rt{\rt{U}}$, $\rt{\rt{\rt{U}}}$, etc. We may continue this recursion until we reach a single vertex.  We write $\brt{U}$ to refer to this single vertex. Similarly, for $u \in V(H)$, we write $[u]_n$ to refer to the unique $U \in \SV_n$ containing $u$. Lastly, we point out how this notation fits together. For $U \in \SV_n$, $\pn{\brt{U}}{n} = U$, $\pn{\brt{U}}{n-1} = \rt{U}$, $\pn{\brt{U}}{n-2} = \rt{\rt{U}}$, ..., and $\pn{\brt{U}}{0} = \brt{U}$.

We also associate a set of colors with each member of $\SV_n$ as follows. For $u \in \SV_0$, $\tc{u} := \cup_{\dom{u}{v}} \ec{uv}$ and for $U \in \SV_{n+1}$, $\tc{U} := \tc{\rt{U}}$ for $n \geq 0$. Lemma \ref{L:DOM_COLORS} demonstrates the importance of this notation.

\blem\label{L:DOM_COLORS}
Suppose $H$ has the tree property for $n$ and $(U, V) \in \SA_{n+1}$. Then $U$ always dominates with the same two colors $\tc{U}$, i.e.
\begin{enumerate}
\item $\ec{UV} = \tc{U}$ and
\item $|\tc{U}| = 2$.
\end{enumerate}
\elem

\bprf

It is clear that $|\tc{U}| = 2$ since $\tc{U}$ is defined inductively and dominance between two vertices must be with exactly two colors. Likewise, (1) certainly holds for $U,V \in \SV_{0}$.

Since $H$ has the tree property for $n$, $|\ec{U' V}| = 1$ for all $U' \in \tr{U}$, and since $\dom{U}{V}$, we know $|\ec{UV}| \geq 2$. Suppose we find $C \in \ec{UV} \setminus \tc{U}$. Then fix $U_C \in \tr{U}$ such that $\ec{U_C V} = C$. If $U_C = \rt{U}$, then for all $U' \in \tr{U} \setminus \{\rt{U}\}$, $\dom{\rt{U}}{U'}$ and we may apply part (1) of Lemma \ref{L:ABC} to get that $\ec{U' V} = C$ and thus $\ec{UV} = C$, a contradiction.

Suppose then that $\dom{\rt{U}}{U_C}$. By induction, $\ec{\rt{U} U_C} = \tc{U}$. We may now apply part (2) of Lemma \ref{L:ABC} to get that either $C \in \ec{\rt{U} \rt{V}}$ or $\dom{\rt{U}}{\rt{V}}$. The former has already been ruled out while the latter contradicts the assumption that $\rt{U}$ and $\rt{V}$ were in different components of $\SV_n$.
\eprf

The following lemma is useful because it allows us to locate a vertex in $U$ that is connected to the rest of $U$ by only the colors contained in $\tc{U}$. 

\blem\label{L:BASE_ROOT_LEMMA}
If $H$ has the tree property for $n$ and $U \in \SV_{n+1}$, then $\ec{U \brt{U}} = \tc{U}$.
\elem
\bprf
By Lemma \ref{L:DOM_COLORS}, $|\tc{U}| = 0 \OR 2$.  If $\tc{U} = \emptyset$, then $U$ is a single vertex, i.e. $U = \{\brt{U}\}$, and thus $\ec{U \brt{U}} = \emptyset$.

For $n=0$, $U$ is either a single vertex, in which case $\tc{U} = \emptyset$, or $U$ is a nontrivial uniformly colored clique, in which case $\tc{U}$ is by definition $\ec{\brt{U}U}$.

For $n \geq 1$, if $U$ is a single vertex, again $\tc{U} = \emptyset$. Otherwise, by induction $\ec{\rt{U} \brt{\rt{U}}} = \tc{\rt{U}} = \tc{U}$. But since $\brt{\rt{U}} = \brt{U}$, we have $\ec{\rt{U} \brt{U}} = \tc{U}$, and by Lemma \ref{L:DOM_COLORS}, $\ec{\rt{U} U'} = \tc{U}$ for every $U' \in \tr{U} \setminus \{\rt{U}\}$. Finally, given that $\brt{U} \in \rt{U}$, $\ec{\brt{U} U'} \subseteq \ec{\rt{U} U'}$ and thus $\ec{\brt{U} U} = \tc{U}$.
\eprf

Lemmas \ref{L:3RD_COLOR_PROPAGATES} and \ref{L:3RD_COLOR_WOUTSIDE} will be used in situations where a tree is connected to another tree or vertex by a color not present in the dominating colors of the first tree.

\blem\label{L:3RD_COLOR_PROPAGATES}
Suppose $H$ has the tree property for $n$, $U,V \in \SV_{n+1}$ distinct, and $V' \in \tr{V}$ such that $C \in \ec{UV'} \setminus \tc{U}$.  Then $\ec{U V'} = C$.
\elem
\bprf
If $\ec{\rt{U} V'} = C$, then for every $U' \in \tr{U} \setminus \{\rt{U}\}$ we may apply part (1) of Lemma \ref{L:ABC} with $\dom{\rt{U}}{U'}$ and $V'$ to get that $\ec{U' V'} = C$ and thus $\ec{UV'} = C$.  Suppose then that $U' \in \tr{U} \setminus \{\rt{U}\}$ such that $\ec{U' V'} = C$.  Applying part (2) of Lemma \ref{L:ABC} with $\dom{\rt{U}}{U'}$ and $V'$, we get that $\ec{\rt{U}V'} = C$ or $\dom{\rt{U}}{V'}$.  Since $\rt{U} \AND V'$ are in different components of $\SV_{n+1}$, we must be in the former case, and by the previous case we are done.
\eprf

\blem\label{L:3RD_COLOR_WOUTSIDE}
Suppose $H$ has the tree property for $n$, $U \in \SV_{n+1}$, and $v \in \SV$ such that $\ec{\brt{U}v} = C \nin \tc{U}$.  Then $\ec{Uv} = C$.
\elem
\bprf
First observe that $v \nin U$ since by Lemma \ref{L:BASE_ROOT_LEMMA}, $\ec{\brt{U}U} = \tc{U}$.  Next let $k$ be maximal such that $\ec{\pn{\brt{U}}{k} v} = C.$  If $k = n+1$, we are done.  Suppose $k < n+1$ and select $u \in \pn{\brt{U}}{k+1} \setminus \pn{\brt{U}}{k}$.  We may apply (1) of Lemma \ref{L:ABC} with $\dom{\pn{\brt{U}}{k}}{u}$ and $v$ to get that $\ec{uv} = C$ and thus violating the maximality of $k$.
\eprf
Note that in Lemma \ref{L:3RD_COLOR_WOUTSIDE} we do not require that $v$ be in $V(H)$ but rather in the larger set $\SV$.

\blem\label{L:DOM_LEMMA}
For $n \geq 0$, suppose $H$ has the tree property for $n$ and $U,V \in \SV_{n+1}$ such that $\ec{UV} \subseteq \tc{U} = \tc{V} = \{A,B\}$  Then the following statements are equivalent:
\begin{enumerate}
\item $\dom{U}{V}$,
\item there is $x \in \SV \setminus {(U \cup V)}$ such that $\dom{U}{x}$ and $\ec{Vx} = C \nin \{A,B\}$, and
\item $\ndom{V}{U}$ and $\ec{UV} = \{A,B\}$.
\end{enumerate}
If the statements are true, then $\sig{U}{V} = \sig{U}{x}$. 
\elem
\bprf
$(1 \Rightarrow 2)$
We may assume $\ec{\rt{U}\rt{V}} = A$.  Since $G$ is maximal and $B \nin \ec{\brt{U}\brt{V}}$, there is $x \in \SV$ such that $A \in \ec{\brt{U}x}$ and $C \in \ec{x\brt{V}}$ or $C \in \ec{\brt{U}x}$ and $A \in \ec{x\brt{V}}$ for $C \nin \{A,B\}$.  Notice that if $C \in \ec{\brt{U} x}$, since $C \nin \tc{U}$, $\brt{U} x$ cannot contain multiple edges and thus $C = \ec{\brt{U} x}$. Likewise, if $C \in \ec{\brt{V} x}$. Furthermore, in either case, since $C \nin \ec{\brt{U}U} = \ec{\brt{V}V}$, we must conclude that $x \in \SV \setminus (U \cup V)$.  Suppose we are in the latter case, i.e. $C = \ec{\brt{U} x}$ and $A \in \ec{x \brt{V}}$.  By Lemma \ref{L:3RD_COLOR_WOUTSIDE}, $\ec{Ux} = C$.  We may then apply part (1) of Lemma \ref{L:ABC} with $\dom{U}{V}$ and $x$ to get that $\ec{Vx} = C$, which contradicts our assumption that $A \in \ec{\brt{V} x}$.

We must then be in the former case, i.e. $A \in \ec{\brt{U}x}$ and $C = \ec{x\brt{V}}$ and, again by Lemma \ref{L:3RD_COLOR_WOUTSIDE}, $\ec{Vx} = C$. Note that if we can show that $C \nin \ec{Ux}$, we may then apply part (2) of Lemma \ref{L:ABC} with $\dom{U}{V}$ and $x$ to get that $\dom{U}{x}$ and that $\sig{U}{V} = \sig{U}{x}$, which is exactly what we would like to prove.

To this end, suppose $C \in \ec{Ux}$ and let $k$ be minimal such that $C \in \ec{\pnbu{k} x}$.  If $k=0$, we have that $C \in \ec{\brt{U} x}$ and it again follows that $\ec{Ux} = C$, which is a contradiction. Thus $k > 0$.  Fix $u \in \pnbu{k}$ such that $C \in \ec{ux}$.  Since $k$ is minimal, $u \in \pnbu{k} \setminus \pnbu{k-1}$ and thus $\dom{\pnbu{k-1}}{u}$. Recall that $\tc{\pnbu{i}} = \tc{U}$ for all $i \leq n$ and thus $\ec{\pnbu{k-1} u} = \{A,B\}$.  

We may now apply part ($2^\prime$) of Lemma \ref{L:ABC} with $\dom{\pnbu{k-1}}{u}$ and $x$ to get that either $C \in \ec{\pnbu{k-1} x}$ or $\dom{\pnbu{k-1}}{x}$ and $\sig{\pnbu{k-1}}{x} = \sig{\pnbu{k-1}}{u}$.  By the minimality of $k$, we must be in the latter case.  Then we may apply part (2) of Lemma \ref{L:ABC} with $\dom{\pnbu{k-1}}{x}$ and $V$ to get that either $C \in \ec{\pnbu{k-1} V}$ or $\dom{\pnbu{k-1}}{V}$. Both of theses cases contradict the assumption that $\ec{\rt{U} V} = A$.  Thus $C \nin \ec{Ux}$.

$(2 \Rightarrow 3)$
We may apply part (2) of Lemma \ref{L:ABC} with $\dom{U}{x}$ and $V$ to get that either $C \in \ec{UV}$ or $\dom{U}{V}$ and $\sig{U}{x} = \sig{U}{V}$.  Since $C \nin \ec{UV} \subseteq \{A,B\}$, we are left with the latter case; $\dom{U}{V}$ and thus $\ec{UV} = \tc{U} = \{A,B\}$ and $\ndom{V}{U}$.

$(3 \Rightarrow 1)$.
Again we may assume $\ec{\rt{U} \rt{V}} = A$.  As argued in $(1 \Rightarrow 2)$, we may find $x \in \SV \setminus (U \cup V)$ such that either $A \in \ec{\brt{U}x}$ and $\ec{Vx} = C$ or $\ec{Ux} = C$ and $A \in \ec{\brt{V}x}$ for some $C \nin \{A,B\}$.  Suppose we are in the latter case.  Since $\ndom{V}{U}$ and $\ec{UV} = \{A,B\}$, there must be $U_A, U_B \in \tr{U}$ and $V' \in \tr{V}$ such that $\ec{U_A V'} = A$ and $\ec{U_B V'} = B$.  Then $x, U_A, V'$ forces $\ec{x V'} \subseteq \{A,C\}$ while $x U_B V'$ forces $\ec{x V'} \subseteq \{B,C\}$.  Thus $\ec{x V'} = C$ and $C \in \ec{Vx}$.

We may then let $k$ be minimal such that $C \in \ec{\pnbv{k} x}$.  If $k = 0$, then $C \in \ec{\brt{V} x}$ and $\ec{Vx} = C$, which contradicts our assumption that $A \in \ec{\brt{V}x}$.  Therefore $k > 0$.  As before, we select $v \in \pnbv{k} \setminus \pnbv{k-1}$ such that $C \in \ec{vx}$.  We now apply part ($2^\prime$) of Lemma \ref{L:ABC} with $\dom{\pnbv{k-1}}{v}$ and $x$ to get that either $C \in \ec{\pnbv{k-1} x}$ or $\dom{\pnbv{k-1}}{x}$.  By the minimality of $k$, we must be in the latter case and we may apply part (2) of Lemma \ref{L:ABC} with $\dom{\pnbv{k-1}}{x}$ and $\rt{U}$ (recall our assumption that $\ec{Ux} = C$) to get that either $C \in \ec{\rt{U} \pnbv{k-1}}$ or $\dom{\pnbv{k-1}}{\rt{U}}$.  Both of these cases contradict the assumption that $\ec{\rt{U} \rt{V}} = A$.

We therefore may assume that $A \in \ec{\brt{U}x}$ and $\ec{Vx} = C$.  It is either the case that $\dom{U}{V}$ or $\ndom{U}{V}$.  If we suppose that $\ndom{U}{V}$, then we are in the case just handled with the roles of $U$ and $V$ reversed.  Since that assumption leads to a contradiction, we have that $\dom{U}{V}$.
\eprf

\blem\label{L:PN_IS_DOM_REGULAR}
If $H$ has the tree property for $n$, then $M_{n+1}(H)$ is complete.
\elem

\bprf
Let $U,V \in \SV_{n+1}$. If $|\ec{UV}| = 1$, then $\{U,V\} \in \SE_{n+1}$. Suppose then that $|\ec{UV}| > 1$.  Notice that if $\tc{U} = \emptyset$, then $U$ is a single vertex and thus $\dom{V}{U}$ and $(V,U) \in \SA_{n+1}$.

We may therefore assume $|\tc{U}| = |\tc{V}| = 2$ and consider the following cases:

Case 1: $\tc{U} \neq \tc{V} \AND |\ec{UV}| > 2$.  We may then select $C_U, C_V \in \ec{UV} $ distinct $\SUT C_U \nin \tc{U} \AND C_V \nin \tc{V}$ and $U' \in \tr{U}, V' \in \tr{V}$ such that $C_V \in \ec{U' V}$ and $C_U \in \ec{U V'}$.  By Lemma \ref{L:3RD_COLOR_PROPAGATES}, $\ec{U' V} = C_V \AND \ec{U V'} = C_U$.  This implies that $C_V = \ec{U' V'} = C_U$, a contradiction.

Case 2: $\tc{U} \neq \tc{V} \AND |\ec{UV}| = 2$.  Then we may assume there is $C \in \ec{UV}$ such that $C \nin \tc{U}$. Let $\ec{UV} = \{C,D\}$. Select $V_C \in \tr{V} \SUT C \in \ec{U V_C}$.  By Lemma \ref{L:3RD_COLOR_PROPAGATES}, $\ec{U V_C} = C$.  Now select $V_D \in \tr{V} \SUT D \in \ec{U V_D}$.  Observe that $C \nin \ec{U V_D}$ since that would imply that $D \nin \ec{U V_D} = C$.  Thus $\ec{U V_D} = D$.  Since $\ec{UV} = \{C,D\}$, we have accounted for every element of $\tr{V}$ and $\dom{V}{U}$, i.e. $(V,U) \in \SA_{n+1}$.

Case 3: $\tc{U} = \tc{V} = \{A,B\} \AND C \in \ec{UV} \setminus \{A,B\}$.  Fix $U_C \in \tr{U} \SUT C \in \ec{U_C V}$.  By Lemma \ref{L:3RD_COLOR_PROPAGATES}, $\ec{U_C V} = C$ and thus for every $V' \in \tr{V}, C \in \ec{UV'}$.  Applying Lemma \ref{L:3RD_COLOR_PROPAGATES} again, gives us that $\ec{UV'} = C$ and thus $\ec{UV} = C$. Thus $\{U,V\} \in \SE_{n+1}$.

Case 4: $\ec{UV} = \tc{U} = \tc{V}$.  If $\ndom{V}{U}$, apply $(3 \Rightarrow 1)$ from Lemma \ref{L:DOM_LEMMA} to get that $\dom{U}{V}$.
\eprf

\blem\label{L:PN_IS_DOM_TRANSITIVE}
Suppose $H$ has the tree property for $n$.  The weak components of $M_{n+1}(H)$ are transitive, and if $(U,V), (V, W) \in \SA_{n+1}$, then $(U,V) \sim_\Sigma (U,W)$.
\elem
\bprf
Let $U,V,W \in \SV_{n+1}$ such that $\dom{U}{V}$ and $\dom{V}{W}$.  By Lemma \ref{L:DOM_COLORS}, $|\tc{U}| = |\tc{V}| = 2$. We consider two cases: $\tc{U} \neq \tc{V}$ and $\tc{U} = \tc{V}$.

Suppose $\tc{U} \neq \tc{V}$ and let $A \in \tc{U} \setminus \tc{V}$.  Fix $U_A \in \tr{U}$ such that $\ec{U_A V} = A$ and $V_1, V_2 \in \tr{V}$ such that $\ec{V_1 W} \neq \ec{V_2 W}$.  Fix $W' \in \tr{W}$. We have that $U_A, V_1, W'$ forces $\ec{U_A W'} \subseteq \{A, \ec{V_1 W'}\}$ while $U_A, V_2, W'$ forces $\ec{U_A W'} \subseteq \{A, \ec{V_2 W'}\}$ and thus $\ec{U_A W'} = A$. Since $W'$ was arbitrary, $\ec{U_A W} = A$.  Note that since $|\ec{U_A W}| = 1$, we have ruled out the possibility that $\dom{W}{U}$.  By Lemma \ref{L:PN_IS_DOM_REGULAR}, we will be done if we can show that $|\ec{UW}| > 1$.  Observe  that we could also choose $U_B \in \tr{U}$ such that $\ec{U_B V} = B \neq A$.  If it happens that $B \nin \tc{V}$, by the same reasoning as above $\ec{U_B W} = B$ so that $\{A,B\} \subseteq \ec{UW}$ and thus $\dom{U}{W}$ and $\Sigma(U,V) = \Sigma(U,W)$.

Suppose then that $\tc{U} = \{A,B\}$ and $\tc{V} = \{B, C\}$.  We can now find $U_A, U_B \in \tr{U}$ such that $\ec{U_A W} = A$ and $\ec{U_B W} \subseteq \{B,C\}$.  Therefore $|\ec{UW}| > 1$ and by Lemma \ref{L:PN_IS_DOM_REGULAR} we have that $\dom{U}{W}$.  By Lemma \ref{L:DOM_COLORS}, $\ec{UW} = \tc{U} = \{A,B\}$ and thus $\ec{U_B W} = B$. Therefore, $\Sigma(U,V) = \Sigma(U,W)$.

We now consider the case $\tc{U} = \tc{V} = \{A,B\}$.  Note that $\ec{UW} \subseteq \{A,B\}$ since we may otherwise easily form a rainbow triangle.  We now have the setup for $(1 \Rightarrow 2)$ of Lemma \ref{L:DOM_LEMMA} with $\dom{U}{V}$ and have $x \in \SV \setminus (U \cup V)$ such that $\dom{U}{x}$, $\sig{U}{V} = \sig{U}{x}$, and $\ec{xV} = C \nin \{A,B\}$.  Applying part (1) of Lemma \ref{L:ABC} to $\dom{V}{W}$ and $x$, we have that $\ec{xW} = C$.  Now apply part (2) of Lemma \ref{L:ABC} with $\dom{U}{x}$ and $W$ to get that either $C \in \ec{UW}$ or $\dom{U}{W}$ and $\sig{U}{W} = \sig{U}{x} = \sig{U}{V}$.  We have already ruled out the former while the latter is what we sought to prove.
\eprf

\blem\label{L:PN_IS_DOM_NONINTERSECTING}
Suppose $H$ has the tree property for $n$. The weak components of $M_{n+1}(H)$ form rooted trees.
\elem
\bprf
After Lemma \ref{L:PN_IS_DOM_TRANSITIVE}, we need only show that for $U_1, U_2, V \in \SV_{n+1}$ distinct, if $\dom{U_1}{V}$ and $\dom{U_2}{V}$, then either $\dom{U_1}{U_2}$ or $\dom{U_2}{U_1}$. By Lemma \ref{L:PN_IS_DOM_REGULAR}, it suffices to show $|\ec{U_1 U_2}| > 1$.

Suppose $|\ec{U_1 U_2}| = 1$.  Observe that if $|\ec{U_1 V} \cup \ec{U_2 V} \cup \ec{U_1 U_2}| > 2$, then we must find a rainbow triangle in $U_1, U_2, V$.  Thus we may assume $\ec{U_1 U_2} = A \in \tc{U_1} = \tc{U_2} = \{A,B\}$. As in the proof of Lemma \ref{L:DOM_LEMMA}, since $\tc{U_1} = \tc{U_2}$, we may select $x \in \SV \setminus (U_1 \cup U_2)$ such that $\ec{U_1 x} = C \nin \{A,B\}$ and $A \in \ec{\brt{U_2} x}$ (or with the roles of $U_1$ and $U_2$ reversed).  We may then apply part (1) of Lemma \ref{L:ABC} with $\dom{U_1}{V}$ and $x$ to get that $\ec{Vx} = C$ and apply part (2) of Lemma \ref{L:ABC} with $\dom{U_2}{V}$ and $x$ to get that either $C \in \ec{U_2 x}$ or $\dom{U_2}{x}$.

First we consider the case $C \in \ec{U_2 x}$.  Let $k$ be minimal such that $C \in \ec{[\brt{U_2}]_k x}$.  If $k = 0$, by Lemma \ref{L:3RD_COLOR_WOUTSIDE}, $\ec{U_2 x} = C$, which contradicts our assumption that $A \in \ec{\brt{U_2} x}$.  Thus $k > 0$ and we may select $u \in [\brt{U_2}]_k \setminus [\brt{U_2}]_{k-1}$ such that $C \in \ec{ux}$.  We may apply part ($2^\prime$) of Lemma \ref{L:ABC} with $\dom{[\brt{U_2}]_{k-1}}{u}$ and $x$ to get that either $C \in \ec{[\brt{U_2}]_{k-1} x}$ or $\dom{[\brt{U_2}]_{k-1}}{x}$.  By the minimality of $k$, we must be in the latter case.  However, by assumptions that $\ec{U_1 U_2} = A$ and $\ec{U_1 x} = C$ and thus we may locate a rainbow triangle in $U_1$, $[\brt{U_2}]_{k-1}$, $x$.

We turn now to the second case, $\dom{U_2}{x}$.  We may apply part (2) of Lemma \ref{L:ABC} with $\dom{U_2}{x}$ and $U_1$ to get that either $C \in \ec{U_1 U_2}$ or $\dom{U_2}{U_1}$, which both contradict our assumption that $\ec{U_1 U_2} = A$. Thus $\dom{U_1}{U_2} \OR \dom{U_2}{U_1}$.
\eprf

Taking Lemmas \ref{L:PN_IS_DOM_REGULAR}, \ref{L:PN_IS_DOM_TRANSITIVE}, and \ref{L:PN_IS_DOM_NONINTERSECTING} together we have proved Theorem \ref{T:DECOMP}.

\section{Construction of Finite Gallai Multigraphs}\label{SEC:CONSTRUCTION}

In Section \ref{SEC:DECOMP}, we found that any reduced maximal Gallai multigraph $(G, \rho)$ can be decomposed into a sequence of mixed graphs, $(M_k(G), \rho', \sim_\Sigma),$ having certain properties. We now reverse this process to construct all finite Gallai multigraphs. An example of this construction is presented in Figure \ref{Example_Construct}.

\begin{construction}[Edge-colored Multigraph Construction $\Gamma$]\label{C:GAMMA}
Given a triple $(M = (\SV, \SE, \SA), \rho', \sim_\Sigma)$ with $M$ a complete mixed graph, $\rho'$ a list coloring of $\SE \cup \SA$ such that $|\rho'(e)| = 1$ for $e \in \SE$ and $|\rho'(e)| = 2$ for $e \in \SA$, and $\sim_\Sigma$ an equivalence on members of $\SA$ sharing an initial vertex, construct a complete, edge-colored multigraph $(G = (V, E), \rho)$ as follows:
\benum
\item Replace each $u \in \SV$ with $(G_u = (E_u, V_u), \rho_u)$, a uniformly colored Gallai multigraph such that if $(u,v) \in \SA$ for some $v \in \SV$, then $|V_u| \geq 2$ and $\rho_u[V_u V_u] = \rho'((u,v))$.
\item For each $u \in \SV$, $\rho \restrict_{E_u} := \rho_u$.
\item For $u, v \in \SV$ distinct, connect $V_u \AND V_v$ as follows:
\benum
\item if $\{u,v\} \in \SE$, then $\rho(\{w_1, w_2\}) = \rho'(\{u, v\})$ for $w_1 \in V_u \AND w_2 \in V_v$;
\item if $(u,v) \in \SA$, then define $\rho$ such that
\benum
	\item $\dom{V_u}{V_v}$,
	\item $\rho[V_u V_v] = \rho'((u,v))$, and
	\item $\Sigma(V_u, V_v) = \Sigma(V_u, V_w)$ whenever $(u,v) \sim_{\Sigma} (u,w)$.
\eenum
It is easy to see that $\rho$ can be defined in this way whenever $|V_u| \geq 2$ as required above.
\eenum
\eenum
\end{construction}

We write $\Gamma((M, \rho', \sim_\Sigma))$ for the family of edge-colored multigraphs resulting from all possible choices of $\{(G_u, \rho_u)\}$ in step (1) and permissible definitions of $\rho$ in step (3.b).

We use the following notation in the construction of Gallai multigraphs.
\bitem
\item $\SG = \{(G_i, \rho_i)\}$ is the family of all finite Gallai multigraphs.
\item $\SG_r = \{(G_i, \rho_i)\}$ is the family of all finite reduced Gallai multigraphs.
\item $\SG_r ^+$ is the family of all induced subgraphs of maximal members of $\SG_r$.
\item $\SM = \{ (M_n(H), \rho, \sim_\Sigma) : H \in \SG_r^+ \AND n \in \N \}$.
\item $\ST$ is the family of rooted trees in $\SM$.
\item $\SM^\ast$ is the family of triples $(M, \rho, \sim_\Sigma)$ such that $\Gamma((M, \rho, \sim_\Sigma)) \subseteq \SG$.
\item $\ST^\ast$ is the family of rooted trees in $\SM^\ast$.
\eitem
Lastly, we let $\SG(n) = \{(G = (V,E), \rho) : |V| \leq n\}$ and likewise for each of the families defined above. We will construct a family $\SM^\prime$ and show that
\bitem
\item $\SM \subseteq \SM^\prime \subseteq \SM^\ast$ and
\item $\SG_r ^+ \subseteq \Gamma[\SM] \subseteq \Gamma[\SM^\prime] \subseteq \Gamma[\SM^\ast] \subseteq \SG$.
\eitem
In particular, we will construct a family of Gallai multigraphs,$\Gamma[\SM^\prime]$, containing the reduced maximal ones.

\showpic{4.5}{Example_Mixed_To_Gallai}{$(M, \rho, \sim_\Sigma)$ (left) and a member of $\Gamma((M, \rho, \sim_\Sigma))$ (right)}{Example_Construct}

\begin{lemma}\label{L:TRIANGLE_CASES}
Suppose $(M_n (G) = (\SV, \SA, \SE), \rho, \sim_\Sigma) \in \SM$. If $\rho'$ is a coloring of $\SA \cup \SE$ such that $\rho'(e) \in \rho(e)$ for all $e \in \SA \cup \SE$ and $\rho'(e_1) = \rho'(e_2)$ whenever $e_1 \sim_\Sigma e_2$, then $(M_n(G), \rho')$ lacks rainbow triangles.
\end{lemma}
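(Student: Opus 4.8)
The plan is to argue by contradiction. Suppose $(M_n(G),\rho')$ has a rainbow triangle on blocks $U,V,W\in\SV$; from it I will produce either a genuine rainbow triangle of $G$ or a violation of the requirement that $\rho'(e_1)=\rho'(e_2)$ whenever $e_1\sim_\Sigma e_2$, in both cases contradicting the hypotheses. Since $(M_n(G),\rho,\sim_\Sigma)\in\SM$, the multigraph $G$ lies in $\SG_r^+$, so by Theorem \ref{T:DECOMP} the mixed graph $M_n(G)$ has the tree property: it is complete, $|\rho[XY]|=1$ when $XY\in\SE$ and $|\rho[XY]|=2$ when $XY\in\SA$, its weak components are rooted trees (hence transitive), and $(X,Y),(Y,Z)\in\SA$ implies $(X,Y)\sim_\Sigma(X,Z)$. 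I also use that $G$, being an induced subgraph of a Gallai multigraph, is itself Gallai.

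\emph{Case 1: no one of $U,V,W$ dominates the other two.} Then at most one of the three edges of the triangle is directed. Any two of those edges share a single vertex, which is either the tail of both, the head of both, or the tail of one and the head of the other; in the first situation that vertex dominates the other two blocks, in the second its two predecessors are comparable in the rooted tree so that one of them dominates the other two blocks, and in the third transitivity of dominance supplies the missing directed edge and again a block dominating the other two --- each contradicting the hypothesis of this case. Now I lift $\rho'$ to $G$. For an undirected edge $XY$, $\rho[XY]$ is a single colour, so every $G$-edge between $X$ and $Y$ has colour $\rho'(XY)$. For the at most one directed edge, say $\dom{X}{Y}$, the definition of dominance gives $\ec{xy}=\ec{xY}$ for all $x\in X,y\in Y$, so $\ec{xy}$ depends only on $x$; since $\rho'(XY)\in\rho[XY]=\bigcup_{x\in X}\ec{xY}$, there is $x^{\ast}\in X$ with $\rho'(XY)\in\ec{x^{\ast}y}$ for every $y\in Y$. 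Choosing $x^{\ast}$ (or an arbitrary vertex of the relevant block when no edge is directed) together with arbitrary vertices of the other two blocks, and in each pair an edge of the prescribed colour, gives a triangle of $G$ whose edges carry the pairwise distinct colours $\rho'(UV),\rho'(VW),\rho'(UW)$ --- a rainbow triangle of $G$, which is impossible.

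\emph{Case 2: some $S\in\{U,V,W\}$ dominates the other two, say $X$ and $Y$.} Then $(S,X),(S,Y)\in\SA$, and by Lemma \ref{L:DOM_COLORS} (Theorem \ref{T:BASECASE} when $n=0$) $\rho[SX]=\rho[SY]=\tc{S}=:\{A,B\}$. It suffices to show $(S,X)\sim_\Sigma(S,Y)$, for then $\rho'(SX)=\rho'(SY)$ and two edges of the triangle agree in colour. If $XY$ is directed, then after possibly interchanging $X$ and $Y$ we have $(S,X),(X,Y)\in\SA$, and $(S,X)\sim_\Sigma(S,Y)$ is property $(4)$ of the tree property. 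If $XY$ is undirected, write $\rho[XY]=\{Q\}$; a rainbow triangle would need $\rho'(SX),Q,\rho'(SY)$ pairwise distinct with $\rho'(SX),\rho'(SY)\in\{A,B\}$, whence $Q\notin\{A,B\}$. Fix $s\in S$ and arbitrary $x\in X,y\in Y$: dominance gives $\ec{sx}=\ec{sX}\subseteq\{A,B\}$ and $\ec{sy}=\ec{sY}\subseteq\{A,B\}$, while $\ec{xy}=\{Q\}$ with $Q\notin\{A,B\}$; since $G$ has no rainbow triangle and $Q$ differs from every colour on $sx$ and on $sy$, the only possibility is that $\ec{sx}=\ec{sy}$ and this set is a singleton. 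As $s$ was arbitrary, $\Sigma(S,X)=\Sigma(S,Y)$, i.e. $(S,X)\sim_\Sigma(S,Y)$.

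The main obstacle is the undirected sub-case of Case 2: property $(4)$ of the tree property only controls chains $(S,X),(X,Y)$, so when $X$ and $Y$ are incomparable in the rooted tree the equality of signatures cannot be read off the decomposition and must instead be forced from the absence of rainbow triangles in $G$ itself. The remaining point requiring attention is the bookkeeping in Case 1 --- checking that ``no block dominates the other two'' really does restrict the triangle to at most one directed edge.
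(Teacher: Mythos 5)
Your proof is correct and follows essentially the same route as the paper: a case analysis on the directed/undirected pattern of the triangle in $M_n(G)$ (justified by the tree property), lifting the coloring to a representative triangle in $G$ when at most one block dominates, and invoking $\sim_\Sigma$ when one block dominates the other two. The only notable divergence is the subcase where the dominated blocks are joined by an undirected edge: the paper (its Case (D)) argues directly with a vertex whose signature values differ and shows the third color must lie in $\tc{U}$, while you instead show that a third color outside $\tc{S}$ forces $\Sigma(S,X)=\Sigma(S,Y)$ (in the spirit of Lemma \ref{L:TREE_MAKING}), which is an equally valid variant of the same idea.
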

\begin{proof}
Fix $U, V, W \in \SV$ distinct. Recall that $U,V, \AND W$ are disjoint subsets of vertices of $G$ and that $(U,V) \in \SA$ if and only if $\dom{U}{V}$ in $G$. We now consider each of the general cases presented in Figure \ref{TRIANGLE_CASES}. Fix $v \in V \AND w \in W$.

\showpic{4.7}{Gamma_Works}{General possible relations between $U,V, \AND W$.}{TRIANGLE_CASES}

Case (A): Select $u \in U$ and note that $\rho'$ colors the triangle $U, V, W$ just as $u,v,w$ is colored in $G$.

Case (B): Select $u \in U$ such that $\ec{uV} = \rho'((U,V))$. Again, $U,V,W$ is colored in the same way as $u,v,w$.

Cases (C) and (E): Since $(U,V) \sim_\Sigma (U,W)$, $\rho'((U,V)) = \rho'((U,W))$.

Case (D): Since $(U,V) \not\sim_\Sigma (U,W)$, we know $\Sigma(U,V) \neq \Sigma(U,W)$ and we may thus select $u \in U$ such that $\ec{uV} \neq \ec{uW}$. It might happen that $\ec{uV} = \rho'((U,V))$ and $\ec{uW} = \rho'((U,W))$. In this case, we again note that $\rho'$ colors $U,V,W$ in the same way that $u,v,w$ is colored in $G$. Suppose then that we are in the alternate case: $\ec{uV} = \rho'((U,W))$ and $\ec{uW} = \rho'((U,V))$. This however also forces $\ec{VW} = \rho'(\{V,W\}) \in \{\rho'((U,V)), \rho'((U,W))\}$.
\end{proof}

\begin{lemma}\label{GAMMA_WORKS}
$\SG_r ^+ \subseteq \Gamma[\SM] \subseteq \SG$ and thus $\SM \subseteq \SM^\ast$.
\end{lemma}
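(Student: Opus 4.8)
The plan is to prove the two inclusions $\SG_r^+ \subseteq \Gamma[\SM]$ and $\Gamma[\SM] \subseteq \SG$ separately; the assertion $\SM \subseteq \SM^\ast$ is then immediate, since by definition a triple lies in $\SM^\ast$ exactly when its $\Gamma$-image is contained in $\SG$, and for a triple of $\SM$ this follows from $\Gamma[\SM]\subseteq\SG$.

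For $\SG_r^+ \subseteq \Gamma[\SM]$, given $H \in \SG_r^+$ I would exhibit $H$ itself as a member of $\Gamma((M_1(H),\rho,\sim_\Sigma))$, which suffices since $(M_1(H),\rho,\sim_\Sigma)\in\SM$. By Lemma~\ref{L:BASECASE} (equivalently Theorem~\ref{T:BASECASE}) the members of $\SV_1$ are the uniformly colored cliques of $H$, each linearly ordered as a rooted tree. Running Construction~\ref{C:GAMMA} on $(M_1(H),\rho,\sim_\Sigma)$ with each piece taken to be the subgraph of $H$ induced on the corresponding $P\in\SV_1$, one checks: this piece is a uniformly colored Gallai multigraph; when $P$ is the source of a directed edge $(P,Q)\in\SA_1$, $\ec{PP} = \tc{P} = \ec{PQ}$ by Lemmas~\ref{L:DOM_COLORS} and~\ref{L:BASE_ROOT_LEMMA}, so the piece's internal colors are exactly those assigned to $(P,Q)$ as step~(1) requires; and the inter-piece requirements of step~(3) — one color across each $\SE_1$-pair, and a dominance with the assigned two colors and matching signatures across each $\SA_1$-pair — are met by the actual coloring of $H$ straight from the definitions of $\SE_1$, $\SA_1$, the induced list coloring, and $\sim_\Sigma$. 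The one point needing argument is the size condition of step~(1), that a source $P$ in $M_1(H)$ has $|P|\geq 2$: a singleton weak component $\{p\}$ of $M_0(H)$ has $|\ec{pw}| = 1$ for every other vertex $w$ (otherwise $\dom{p}{w}\OR\dom{w}{p}$ would place $p$ in a larger component), and a vertex all of whose edges are monochromatic cannot meet clause~(2) of the dominance definition, so $\{p\}$ dominates nothing in $M_1(H)$. Hence $H\in\Gamma[\SM]$.

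For $\Gamma[\SM]\subseteq\SG$, let $G'\in\Gamma((M_n(H),\rho,\sim_\Sigma))$ with $H\in\SG_r^+$ and let $(G_P)_{P\in\SV_n}$ be its pieces. By Theorem~\ref{T:DECOMP}, $H$ has the tree property for all $n$, so $M_n(H)$ is complete, list sizes are $1$ on $\SE_n$ and $2$ on $\SA_n$, its weak components are rooted trees, and $\sim_\Sigma$-equivalent directed edges carry equal signatures and hence equal lists. The key preliminary is that, in $G'$, each vertex of a dominating piece sends a single color to all of the dominated piece: this is how the dominances built in step~(3.b) of Construction~\ref{C:GAMMA} are to be read, mirroring the fact (Theorem~\ref{T:DECOMP}(2), Lemma~\ref{L:DOM_COLORS}) that in $M_n(H)$ each vertex-branch of a component sees one color across a dominance edge; consequently every edge of $G'$ between two distinct pieces is monochromatic. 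Now fix a triangle $x,y,z$ of $G'$ with $x\in V_P$, $y\in V_Q$, $z\in V_R$. If $P,Q,R$ are not pairwise distinct, the triangle is handled directly: three vertices inside one uniformly colored piece span at most two colors, and if exactly two of $P,Q,R$ coincide, the two sides joining the lone vertex to the common piece either share a single color or — when the common piece dominates the other — lie, together with the third side, inside that piece's two-color palette (by step~(1) and Lemma~\ref{L:DOM_COLORS}); in every sub-case at most two colors occur. If $P,Q,R$ are pairwise distinct, set $\rho'$ on the three edges of the triangle $\{P,Q,R\}$ in $M_n(H)$ equal to the three single colors realized on the sides of $x,y,z$; this partial assignment is $\sim_\Sigma$-consistent (if two of those edges share a dominating source, step~(3.b.iii) forces the two chosen colors to agree), so — since $\sim_\Sigma$-classes have common lists — it extends to a representative coloring $\rho'$ of $\SE_n\cup\SA_n$, and by Lemma~\ref{L:TRIANGLE_CASES} the triangle $\{P,Q,R\}$ is not rainbow under $\rho'$, i.e.\ those three colors are not all distinct. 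Hence $G'$ has no rainbow triangle. Together with the first inclusion, $\SG_r^+\subseteq\Gamma[\SM]\subseteq\SG$, and therefore $\SM\subseteq\SM^\ast$.

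The step I expect to be the main obstacle is the single-color claim behind the second inclusion — that a dominance produced by $\Gamma$ assigns each vertex of the dominating piece exactly one of the two colors on the edge, equivalently that the one dangerous configuration (a vertex of the dominating piece seeing both of its colors, which would combine with the palette of the dominated piece to form a rainbow triangle) cannot arise. Making this precise amounts to reading off exactly what step~(3.b) of Construction~\ref{C:GAMMA} permits and matching it against the decomposition of Section~\ref{SEC:DECOMP}; once it is in hand, the remainder of $\Gamma[\SM]\subseteq\SG$ is the routine case check above, and the pairwise-distinct case reduces, via the representative coloring, entirely to Lemma~\ref{L:TRIANGLE_CASES}.
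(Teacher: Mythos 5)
Your proposal is correct and takes essentially the same route as the paper: the first inclusion by recognizing each $H \in \SG_r^+$ as itself a member of $\Gamma((M_1(H),\rho,\sim_\Sigma))$, and the second by disposing of triangles meeting at most two pieces directly and reducing triangles meeting three distinct pieces to a $\sim_\Sigma$-consistent single-color representative of the lists on $M_n(H)$, to which Lemma \ref{L:TRIANGLE_CASES} applies (the paper realizes this representative by selecting one vertex from each piece rather than by your per-triangle extension, an inessential difference). The details you supply beyond the paper's terse proof --- the $|P|\geq 2$ check for sources of $M_1(H)$ and the reading of step (3.b) of Construction \ref{C:GAMMA} as giving each vertex of the dominating piece a single color --- are exactly the points the paper leaves implicit, and your argument fills them in correctly.
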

\begin{proof}
To see that $\Gamma[\SM] \subseteq \SG$, let $(M = (\SV, \SA, \SE), \rho, \sim_\Sigma) \in \SM$ and fix $(G = (V,E), \rho') \in \Gamma((M, \rho, \sim_\Sigma))$. Let $\{(G_u =(E_u, V_u), \rho_u)\}$  be the family of Gallai multigraphs used in step (1) of the construction of $(G, \rho')$. We now show that $(G, \rho')$ lacks rainbow triangles.

First note that for a given triangle, $u,v,w$, if any two of the vertices correspond to the same vertex of $M$, i.e. fall in the same $V_i$, then $u,v,w$ must have a repeated color.

Therefore we only need to consider the triangles formed by vertices from different $G_i$. In particular, we may form $V' \subseteq V$ by selecting a single vertex from each of $V_i$ and consider $(V', E', \rho')$, the induced graph of $G$ by $V'$.

We will be done if we show that $(V',E', \rho')$ lacks rainbow triangles. Notice that this triple is equivalent to an edge-coloring of $M_n(G)$ of the form described in Lemma \ref{L:TRIANGLE_CASES} and thus $(V',E', \rho')$ lacks rainbow triangles.

Finally, note that $H \in \Gamma((M_1(G), \rho_1, \sim_\Sigma ))$ for each $H \in \SG_r ^+$ and thus $\SG_r ^+ \subseteq \Gamma[\SM]$.

\end{proof}

We now construct a family $\SM^\prime$ such that $\SM \subseteq \SM^\prime \subseteq \SM^\ast$.

\begin{construction}[Forest Construction $\Delta_F$]\label{C:DELTA_F}
Suppose we have at our disposal $\SN \subseteq \SM^\ast$. Set $\ST_\SN = \SN \cap \ST$ and form a triple $(M' = (\SV', \SE', \SA'), \rho', \sim_{\Sigma'})$ as follows:

\benum
\item Fix $(M = (\SV, \SE, \SA), \rho, \sim_\Sigma) \in \SN$.
\item For each $u \in \SV$, fix $(T_u = (V_u, E_u, A_u), \rho_u, \sim_{\Sigma_u}) \in \ST_\SN$ such that if $(u,v) \in \SA$ for some $v \in \SV$, then
\benum
\item $\tc{T_u} = \rho((u,v))$ (this is just an issue of labeling) and
\item $|V_u| \geq 2$. 
\eenum
\item Set $\SV' := \cup_{u \in \SV} V_u$.
\item Set $\SA' := \cup_{u \in \SV} A_u$.
\item Set $\SE' := (\cup_{u \in \SV} E_u) \bigcup \left\{ \{ w_1, w_2 \} : w_1 \in V_u, w_2 \in V_v, \AND u \neq v\right\}$.
\item Define $\rho'$ as follows:
\benum
\item $\rho'\restrict_{A_u \cup E_u} := \rho_u$ for all $u \in \SV$;
\item for $\{w_1, w_2\} \in \SE'$ where $w_1 \in V_u, w_2 \in V_v, \AND u \neq v$,
\benum
\item if $\{u,v\} \in \SE$, then $\rho'(\{w_1, w_2\}) := \rho(\{u,v\})$;
\item if $(u,v) \in \SA$, then define $\rho'$ on $V_u V_v$ so that
\benum
\item $\dom{V_u}{V_v}$,
\item $\rho'[V_u V_v] = \rho((u,v))$, and 
\item $\Sigma(V_u, V_v) = \Sigma(V_u, V_w)$ whenever $(u,v) \sim_\Sigma (u,w)$

\eenum
\eenum
\eenum
\eenum
\end{construction}

Key to this construction is understanding when and how step (6.b.ii) can be accomplished. We call this the signature configuration problem and resolve it in Section \ref{ALL_SIGNATURES}

We write $\Delta_F (\SN)$ for the family of all mixed graphs resulting from one iteration of this construction beginning with $\SN$. Notice that $\Delta_F (\SN)$ constructs no rooted trees that were not already present in $\SN$, and we will therefore need a separate construction for trees. We present this tree construction in Section \ref{MAKING_TREES} and write $\Delta_T(\SN)$ for the resulting family of trees. Finally, we set $\Delta(\SN) := \Delta_F(\SN) \cup \Delta_T(\SN)$.

We let $\SM_0$ be the family of elements in $\SM$ having no directed edges (and are thus essentially Gallai graphs) and for $n \geq 0$, set $\SM_{n+1} := \Delta(\SM_n)$ and $\SM^\prime := \cup_{n=0}^\infty \SM_n$.

\begin{theorem}\label{T:CONSTRUCTION}
$\SM \subseteq \SM^\prime \subseteq \SM^\ast$ and thus $\SG_r ^+ \subseteq \Gamma(\SM^\prime) \subseteq \SG$.
\end{theorem}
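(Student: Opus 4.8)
The two displayed inclusions reduce to the single chain the section has already advertised: granted $\SM \subseteq \SM^\prime \subseteq \SM^\ast$, Lemma \ref{GAMMA_WORKS} and the monotonicity of $\Gamma[\cdot]$ give $\SG_r^+ \subseteq \Gamma[\SM] \subseteq \Gamma[\SM^\prime]$, while $\Gamma[\SM^\prime] \subseteq \Gamma[\SM^\ast] \subseteq \SG$ is monotonicity together with the defining property of $\SM^\ast$. So the plan is to prove the two set inclusions $\SM \subseteq \SM^\prime$ and $\SM^\prime \subseteq \SM^\ast$, using the construction $\Delta_T$ of Section \ref{MAKING_TREES} and the resolution of the signature configuration problem in Section \ref{ALL_SIGNATURES}.

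\emph{Soundness}, $\SM^\prime \subseteq \SM^\ast$. Since $\SM^\prime = \bigcup_n \SM_n$, we induct on $n$. The base case $\SM_0 \subseteq \SM^\ast$ holds since $\SM_0 \subseteq \SM$ and $\SM \subseteq \SM^\ast$ by Lemma \ref{GAMMA_WORKS}. For the step it suffices that $\Delta_F$ and $\Delta_T$ each preserve inclusion into $\SM^\ast$ (apply this with $\SN = \SM_n$). For $\Delta_F$ the key is a substitution principle: if $\SN \subseteq \SM^\ast$ and $(M',\rho',\sim_{\Sigma'}) \in \Delta_F(\SN)$ is built from $(M,\rho,\sim_\Sigma) \in \SN$ by blowing up each vertex $u$ into a tree $(T_u,\rho_u,\sim_{\Sigma_u}) \in \SN \cap \ST$, then any multigraph in $\Gamma((M',\rho',\sim_{\Sigma'}))$ arises by first running $\Gamma$ inside each $T_u$, producing a Gallai multigraph — since $T_u \in \SM^\ast$ — with dominating colors $\rho((u,v)) = \tc{T_u}$, and then combining these pieces exactly as $\Gamma((M,\rho,\sim_\Sigma))$ prescribes, the signatures at the combining step having been fixed to respect $\sim_\Sigma$. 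The resulting multigraph thus lies in $\Gamma((M,\rho,\sim_\Sigma)) \subseteq \SG$, whence $(M',\rho',\sim_{\Sigma'}) \in \SM^\ast$. (That the combining step — item (6.b.ii) of Construction \ref{C:DELTA_F} — can always be carried out is precisely what Section \ref{ALL_SIGNATURES} supplies.) That $\Delta_T$ preserves inclusion into $\SM^\ast$ is of the same character and is proved where $\Delta_T$ is defined.

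\emph{Completeness}, $\SM \subseteq \SM^\prime$. We argue by strong induction on $|V(M)|$ for $M = (M_n(H),\rho,\sim_\Sigma) \in \SM$. If $M$ has no directed edges then $M \in \SM_0 \subseteq \SM^\prime$. If the directed part of $M$ is disconnected, with weak components $T_1,\dots,T_k$ ($k \ge 2$; rooted trees by property (3) of Theorem \ref{T:DECOMP}) supported on $U_1,\dots,U_k \in \SV_{n+1}$, then one shows that forming the decomposition commutes with restricting $H$ to a union of weak components of $M_n(H)$, so that each $T_i = M_n(H\restrict_{U_i}) \in \SM \cap \ST$, and that $M$ is exactly the mixed graph obtained by applying $\Delta_F$ to $M_{n+1}(H) \in \SM$ with the trees $T_1,\dots,T_k$. (Here one checks the side conditions of Construction \ref{C:DELTA_F}; in particular, that a vertex $U \in \SV_{n+1}$ emitting a directed edge has $|V(T_U)| \ge 2$, which follows from Theorem \ref{T:BASECASE}.) Since $M_{n+1}(H)$ and all $T_i$ have strictly fewer vertices than $M$, the induction hypothesis places them in $\SM^\prime$; collecting these finitely many objects at a common stage $\SM_j$ gives $M \in \Delta_F(\SM_j) \subseteq \SM^\prime$. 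The only remaining case — $M$ connected, that is $M$ itself a rooted tree — is exactly what $\Delta_T$ is built to handle: Section \ref{MAKING_TREES} shows that every rooted tree in $\SM$ is produced by $\Delta_T$ from strictly smaller members of $\SM^\prime$, the recursion bottoming out at the one-vertex tree of $\SM_0$.

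The main obstacle is this last, connected case. A rooted tree in $\SM$ admits no reduction to a strictly smaller mixed graph by coarsening, so it has to be rebuilt root-first by attaching child-subtrees, and the colors assigned to its dominance edges must be chosen so as to realize simultaneously all the prescribed signatures along with the $\sim_\Sigma$-identifications (property (4) of Theorem \ref{T:DECOMP} and the conditions $\Sigma(U,V) = \Sigma(U,W)$). Showing that such a configuration always exists is the signature configuration problem of Section \ref{ALL_SIGNATURES}; it is the technical crux on which both inclusions ultimately rest for trees. A lesser, bookkeeping hurdle is the \emph{localization} statement used above — that weak components of $M_n(H)$ are themselves of the form $M_n(H')$ with $H' \in \SG_r^+$ — which must be verified with some care so that the $\Delta_F$ reduction in the disconnected case genuinely stays inside $\SM$.
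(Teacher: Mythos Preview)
Your overall architecture matches the paper's: induct on $n$ for $\SM' \subseteq \SM^\ast$, treating $\Delta_T$ and $\Delta_F$ separately, and then argue $\SM \subseteq \SM'$. On completeness you actually give more than the paper, which disposes of $\SM \subseteq \SM'$ in one line (``It is clear''); your size-induction via $\Delta_F$ on the disconnected case and $\Delta_T$ on the connected case is the intended content behind that line, and your use of Lemma~\ref{L:PRUNED_TREES} and Theorem~\ref{T:TREE_CONSTRUCTION_WORKS} is right.

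The genuine gap is in your soundness argument for $\Delta_F$. You assert a substitution principle: a multigraph in $\Gamma((M',\rho',\sim_{\Sigma'}))$ is obtained by running $\Gamma$ on each $T_u$ and then ``combining these pieces exactly as $\Gamma((M,\rho,\sim_\Sigma))$ prescribes,'' concluding that the result ``lies in $\Gamma((M,\rho,\sim_\Sigma)) \subseteq \SG$.'' That last membership is false. By definition (Construction~\ref{C:GAMMA}, step (1)), $\Gamma$ replaces each vertex of $M$ by a \emph{uniformly colored} Gallai multigraph, whereas the pieces you are inserting---elements of $\Gamma(T_u)$---are in general far from uniformly colored. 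So knowing $M \in \SM^\ast$, i.e.\ $\Gamma(M) \subseteq \SG$, does not by itself control the composite object. In particular, your argument says nothing about the ``mixed'' triangles with two vertices in one $T_u$ and the third in another tree; these are precisely the cases where non-uniform coloring inside $T_u$ interacts with the inter-tree signatures.

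The paper fixes this by abandoning the substitution shortcut and doing a direct four-case triangle analysis on $u,v,w$ according to whether $T_u,T_v,T_w$ coincide. Cases with all three in one tree or all three in distinct trees are handled by $T_u \in \SM^\ast$ and $M \in \SM^\ast$ respectively (the latter via single-vertex representatives, which \emph{are} trivially uniformly colored). The two mixed cases require separate arguments: when the lone vertex is on the dominating side, $\Sigma(T_u,v)=\Sigma(T_u,w)$ gives a repeated color; when the pair $u,v$ is on the dominating side, one uses that $\Sigma(T_u,T_w)(u) \neq \Sigma(T_u,T_w)(v)$ forces $\ec{uv} \subseteq \tc{T_u}$ (this is where the signature-configuration constraint from Section~\ref{ALL_SIGNATURES} and step (6.b.ii) are actually used), so all three sides lie in the two-element set $\tc{T_u}$. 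You need to supply this case analysis; the substitution principle as you state it does not go through.
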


We prove Theorem \ref{T:CONSTRUCTION} after addressing the signature configuration problem in Section \ref{ALL_SIGNATURES} and describing the tree construction $\Delta_T$ in Section \ref{MAKING_TREES}.

\subsection{Signature Configuration Problem}\label{ALL_SIGNATURES}

Here we address the following basic problem arising in step (6.b.ii) of the $\Delta_F$ construction: given $T \in \ST$ and a set of vertices $U$, describe all possible ways to join $T \AND U$ with two colors so that $\dom{T}{U}$ and $T \cup U$ lacks rainbow triangles.

First observe that, since $\dom{T}{U}$, the set of vertices $U$ is irrelevant and that we may just as well assume there is only a single vertex $u$. Now suppose we would like to define dominance between $T = (V,E,A)$ and $u$ using the colors $A \AND B$. Up to relabeling, we may assume $\tc{T} = \{A,B\}$. The signature of $\dom{T}{u}$ is determined by the map $\Sigma:V \rightarrow \{A,B\}$ given by $v \mapsto \ec{vu}$.

For $v_1, v_2 \in V$, if $\ec{v_1 v_2} \not\subseteq \{A, B\}$, then to avoid a rainbow triangle it must be the case that $\Sigma(v_1) = \Sigma(v_2)$, i.e. $\ec{v_1 u} = \ec{v_2 u}$. We therefore partition $V$ by first defining the relation $v_1 \sim v_2$ if $\ec{v_1 v_2} \not\subseteq \{A, B\}$ and then extending $\sim$ to an equivalence.

\begin{lemma}\label{L:DEFINING_DOMINANCE}
$T \cup u$ lacks rainbow triangles if and only if $\Sigma$ is constant on $\sim$ classes.
\end{lemma}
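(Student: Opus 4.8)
The plan is to prove both directions directly from the definition of $\sim$ and the Gallai (rainbow-triangle-free) condition, treating the three possible types of triangle inside $T \cup u$ separately.

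\emph{Necessity.} Suppose $\Sigma$ is \emph{not} constant on some $\sim$ class. Since $\sim$ is the equivalence generated by the relation $v_1 \sim v_2 \iff \ec{v_1 v_2} \not\subseteq \{A,B\}$, there is a finite chain $v_1, v_2, \dots, v_k$ in a single class with $\ec{v_i v_{i+1}} \not\subseteq \{A,B\}$ for each $i$, and with $\Sigma(v_1) \neq \Sigma(v_k)$; by collapsing the chain we may assume there are two vertices $v, v'$ with $\ec{vv'} \ni C \nin \{A,B\}$ and $\Sigma(v) = A$, $\Sigma(v') = B$ (so $A \in \ec{vu}$, $B \in \ec{v'u}$). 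Then the triangle $v, v', u$ carries colors $C$, $A$, $B$, which are pairwise distinct, a rainbow triangle. Hence if $T \cup u$ lacks rainbow triangles, $\Sigma$ must be constant on every $\sim$ class.

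\emph{Sufficiency.} Assume $\Sigma$ is constant on $\sim$ classes; we must check that no triangle in $T \cup u$ is rainbow. Triangles entirely inside $T$ are fine because $T$ is itself a (reduced, list-colored) member of $\ST$ and hence Gallai. So consider a triangle $v_1, v_2, u$ with $v_1, v_2 \in V$. Its colors are drawn from $\ec{v_1 v_2}$, $\ec{v_1 u} = \Sigma(v_1) \in \{A,B\}$, and $\ec{v_2 u} = \Sigma(v_2) \in \{A,B\}$. If $\ec{v_1 v_2} \subseteq \{A,B\}$, then all three edges use only the two colors $A$ and $B$, so the triangle is not rainbow. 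If instead $\ec{v_1 v_2} \not\subseteq \{A,B\}$, then $v_1 \sim v_2$ by definition, so $\Sigma(v_1) = \Sigma(v_2)$: the edges $v_1 u$ and $v_2 u$ receive the same color, and again the triangle is not rainbow. (Note each of $v_1 v_2$, $v_1 u$, $v_2 u$ is a single edge under $\rho'$, since by Theorem~\ref{T:DECOMP} the only multi-edges among tree vertices are the dominating pairs, which we are in the process of defining; no rainbow triangle can be formed by a choice of representative edges either, by the same case split applied colorwise.) Since every triangle is accounted for, $T \cup u$ lacks rainbow triangles.

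The only delicate point is the reduction in the necessity direction from ``$\Sigma$ nonconstant on a $\sim$ class'' to ``two adjacent-in-the-generating-relation vertices with different $\Sigma$-values'', which is just the observation that a function constant on each edge of a connected relation is globally constant; everything else is a three-color case check. I expect this to be genuinely routine, so the main content is simply organizing the triangle cases cleanly. One should also remark that the lemma implicitly fixes $\tc{T} = \{A,B\}$, so that ``$\ec{v_1 v_2} \subseteq \{A,B\}$'' is exactly the condition under which the pair $v_1 v_2$ can coexist with a two-colored dominance without constraint — this matches the role of $\tc{\cdot}$ in Lemmas~\ref{L:DOM_COLORS} and \ref{L:BASE_ROOT_LEMMA}.
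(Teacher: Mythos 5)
Your proof is correct and is essentially the argument the paper has in mind: the paper's own proof is the one-line remark that the statement ``follows directly from the definition of $\sim$,'' and your two directions (a chain in a $\sim$ class where $\Sigma$ changes value yields a triangle colored $A$, $B$, $C$; conversely the case split on whether $\ec{v_1 v_2} \subseteq \{A,B\}$) are exactly the routine verification being elided. No substantive difference in approach, only in the level of detail.
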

\begin{proof}
The statement follows directly from the definition of $\sim$.
\end{proof}

\begin{theorem}
For $T = (V,E,A) \in \ST$ and $M \in \SM$, the ways of joining $T$ and $M$ so that $\dom{T}{M}$ and $T \cup M$ lacks rainbow triangles correspond exactly to choice functions $\Sigma: (V/\sim) \twoheadrightarrow \tc{T}$.
\end{theorem}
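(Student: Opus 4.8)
The plan is to read the bijection directly off the signature map: Lemma~\ref{L:DEFINING_DOMINANCE} supplies the ``no rainbow triangle'' half, the definition of dominance supplies the surjectivity half, and essentially everything else has already been arranged in the discussion leading up to Lemma~\ref{L:DEFINING_DOMINANCE}.

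First I would record that reduction. Because a join realizing $\dom{T}{M}$ forces $\ec{tm} = \ec{tM}$ for all $t \in V$ and $m \in M$, the colour on a $T$--$M$ edge depends only on its endpoint in $T$; hence the internal structure of $M$ is irrelevant and we may replace $M$ by a single vertex $u$ (a triangle with two vertices in $M$ has its two $T$--$M$ edges coloured alike, and a triangle lying wholly inside $M$ is $M$'s own affair, so $T \cup M$ lacks rainbow triangles iff $T \cup \{u\}$ does). Joining with the two colours $\tc{T} = \{A,B\}$, the join is then recorded precisely by the signature $\Sigma \colon V \to \tc{T}$, $v \mapsto \ec{vu}$. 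We may take $|V| \ge 2$, as holds whenever $T$ is to dominate something (cf.\ the requirement in Construction~\ref{C:DELTA_F}); the one-vertex case is degenerate.

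Now I would impose the two requirements on $\Sigma$. By Lemma~\ref{L:DEFINING_DOMINANCE} the join creates no rainbow triangle exactly when $\Sigma$ is constant on each $\sim$-class, i.e.\ exactly when $\Sigma$ factors uniquely as $\Sigma = \bar\Sigma \circ \pi$ through the quotient projection $\pi \colon V \twoheadrightarrow V/\!\sim$ for some $\bar\Sigma \colon V/\!\sim \,\to\, \tc{T}$. Independently, the relation obtained between $T$ and $M$ is a dominance $\dom{T}{M}$ --- rather than a single undirected edge --- precisely when $|\ec{TM}| > 1$, that is, when both colours of $\tc{T}$ occur as values of $\Sigma$, equivalently (since $\pi$ is onto) when $\bar\Sigma$ is onto. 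Combining, the valid joins are in bijection with the surjections $\bar\Sigma \colon V/\!\sim \,\twoheadrightarrow\, \tc{T}$ --- the ``choice functions'' of the statement. For the reverse direction of the bijection one checks that each such $\bar\Sigma$ does yield a legitimate join: $\dom{T}{M}$ holds by surjectivity together with the automatic uniformity noted above, and $T \cup M$ lacks rainbow triangles by Lemma~\ref{L:DEFINING_DOMINANCE} and the reduction of the preceding paragraph, using that $T, M \in \SM \subseteq \SM^\ast$ (Lemma~\ref{GAMMA_WORKS}) so the colourings internal to $T$ and to $M$ are already triangle-free.

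I do not anticipate a real obstacle; the argument is a bookkeeping assembly of earlier results. The point that genuinely needs care is recognising that the correspondence is with \emph{surjections} onto $\tc{T}$, and that this surjectivity is neither an extra hypothesis nor a coincidence but is exactly equivalent to the defining inequality $|\ec{TM}| > 1$ of dominance --- dropping it would instead classify the undirected joins $\{T,M\} \in \SE$. A secondary point worth stating cleanly is that, for the list-coloured mixed graph $T$, ``lacks rainbow triangles'' must be read as ``in every realization''; this is exactly what the relation $\sim$ (pairs of vertices of $T$ whose colour list is not contained in $\{A,B\}$) tracks, so once that is granted Lemma~\ref{L:DEFINING_DOMINANCE} contributes its half with nothing further to prove.
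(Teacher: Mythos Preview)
Your proposal is correct and follows essentially the same approach as the paper: reduce $M$ to a single vertex via the definition of dominance, invoke Lemma~\ref{L:DEFINING_DOMINANCE} for the rainbow-triangle condition, and identify surjectivity of $\bar\Sigma$ with the requirement $|\ec{TM}|>1$ in the definition of dominance. Your write-up is more detailed than the paper's two-sentence proof, but the logical skeleton is the same.
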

\begin{proof}
The only difference between this statement and Lemma \ref{L:DEFINING_DOMINANCE} is that $T$ is now dominating a set of vertices rather than a single vertex, but since $\dom{T}{M}$, whatever colors may be present in $M$ cannot form rainbow triangles with vertices from $T$. The requirement that $\Sigma$ be onto corresponds to the fact that dominance requires multiple colors and $|\tc{T}| = 2$.
\end{proof}

\subsection{Tree Construction $\Delta_T$}\label{MAKING_TREES}

We now describe how to construct rooted trees of size $n+1$ out of a mixed graph of size $n$. Here we use the fact that Theorem \ref{T:DECOMP} holds for all of $\SG_r ^+$, rather than just the maximal members.

We would like to construct $T = (V,E,A) \in \ST$ using elements of $\SM$ having fewer than $|V|$ vertices. Perhaps the most natural approach would be to consider the collection of subtrees $\{T_i\}$ in $V \setminus \rt{T}$ as in the left-most image of Figure \ref{BUILDING_TREES} and then list all possible ways to assign signatures to $\dom{\rt{T}}{T_i}$. One quickly realizes that with this approach you must also describe how each pair of $\{T_i\}$ is related.

\showpic{4.5}{Building_Trees}{Constructing a tree from a smaller forest.}{BUILDING_TREES}

Notice, however, that since $T \in \ST \subseteq \SM$, $V$ corresponds to the induced subgraph of some reduced maximal Gallai multigraph, and therefore $V \setminus \rt{V}$ does as well. We may thus use Theorem \ref{T:DECOMP} to show that $V \setminus \rt{V}$ decomposes into a mixed graph in $\SM$. In particular, Lemma \ref{L:PRUNED_TREES} shows that if we remove the root from $T$, we are left with $M \in \SM$ whose weak components are $\{T_i\}$, as in the center image of Figure \ref{BUILDING_TREES}. We may now collapse $M$ into $M'$, as in the right-most image of Figure \ref{BUILDING_TREES}.

\begin{lemma}\label{L:PRUNED_TREES}
Let $(G = (V,E), \rho) \in \SG_r ^+$ and $M_k(G) = (\SV_k, \SE_k, \SA_k)$ for all $k \leq n$. Let $V_T \in \SV_n$ and let $H$ be the induced subgraph of $G$ by $V \setminus V_T$. Let $M_k(H) = (\SV_k ', \SE_k ', \SA_k ')$ for all $k \leq n$. Set $V_T(k) = \{W \in \SV_k : W \subseteq V_T\}$. Then $\SV_k = \SV' _k \cup V_T (k)$ for all $k \leq n$.
\end{lemma}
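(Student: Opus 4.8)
The plan is to prove the assertion by induction on $k$, $0 \le k \le n$, along with the slightly stronger statement that $M_k(H)$ is exactly the sub-mixed-graph of $M_k(G)$ induced on the classes of $\SV_k$ that are disjoint from $V_T$: in addition to $\SV_k = \SV_k' \cup V_T(k)$, we ask that $\SE_k' = \{\,\{W_1,W_2\} \in \SE_k : W_1,W_2 \in \SV_k'\,\}$ and $\SA_k' = \{\,(W_1,W_2) \in \SA_k : W_1,W_2 \in \SV_k'\,\}$.

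For the base case $k=0$ (identifying $\SV_0$ with the partition of $V$ into singletons), $\SV_0$ partitions into the singletons of $V\setminus V_T$ and those of $V_T$, i.e.\ $\SV_0 = \SV_0' \cup V_T(0)$. Since $H$ is an induced subgraph of $G$, for $u,v \in V\setminus V_T$ the edge set $uv$ and its colouring agree in $H$ and in $G$, so both $|\rho[uv]|=1$ and $\dom{u}{v}$ --- which for single vertices depends only on $\ec{uv}$ and the order of $u$ and $v$ --- have the same truth value in $H$ as in $G$; hence $M_0(H)$ is the restriction of $M_0(G)$ to $\SV_0'$.

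For the inductive step, assume the claim for $k$ with $k+1 \le n$. The key structural observation is that $V_T$ does not straddle the weak components of $M_k(G)$: since $V_T \in \SV_n$ and each $\SV_{j+1}$ is a coarsening of $\SV_j$, the set $V_T$ is a union of members of $\SV_{k+1}$, i.e.\ of weak components of $M_k(G)$; and since a weak component is closed under $\SA_k$-adjacency, every directed edge of $M_k(G)$ has both of its endpoints among classes contained in $V_T$, or both among classes disjoint from $V_T$ --- which, by the inductive hypothesis, are precisely the classes in $\SV_k'$. Consequently each weak component of $M_k(G)$ is either contained in $V_T$, and these are exactly the members of $V_T(k+1)$, or is made up of classes in $\SV_k'$ joined only by $\SA_k$-edges that (again by the inductive hypothesis) are the directed edges of $M_k(H)$; the components of the second kind are exactly the weak components of $M_k(H)$, that is, the members of $\SV_{k+1}'$. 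This gives $\SV_{k+1} = \SV_{k+1}' \cup V_T(k+1)$, and the strengthened part at level $k+1$ follows as in the base case, since for $W_1,W_2 \in \SV_{k+1}'$ we have $W_1 \cup W_2 \subseteq V\setminus V_T$ and all the colour sets relevant to deciding $\dom{W_1}{W_2}$ and $|\rho[W_1 W_2]|=1$, together with the vertex order, agree in $H$ and $G$.

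The one step I expect to require care is the structural observation in the inductive step --- that deleting a full level-$n$ component $V_T$ neither splits nor merges the weak components outside it --- which relies on the nesting of the partitions $\SV_0, \SV_1, \dots, \SV_n$ and on $V_T$ being a union of $\SV_{k+1}$-classes for every $k < n$; the remaining work is the routine check that dominance and single-colouredness are inherited by induced subgraphs. Note that this argument, unlike most of the decomposition in this section, does not use the tree property of Theorem~\ref{T:DECOMP} at all --- it concerns only the combinatorics of the partition/mixed-graph sequence.
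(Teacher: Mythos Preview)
Your argument is correct and follows essentially the same route as the paper's proof: the key observation in both is that, since $V_T \in \SV_n$ and the partitions $\SV_0,\dots,\SV_n$ are successive coarsenings, no directed edge of $M_k(G)$ can run between $V_T(k)$ and $\SV_k'$, so the weak components (and hence $\SV_{k+1}$) split cleanly. The paper phrases this as a minimal-counterexample argument and leaves implicit the fact that dominance between members of $\SV_k'$ is the same in $H$ as in $G$; your version makes this explicit by carrying the stronger inductive hypothesis on $\SE_k'$ and $\SA_k'$, which is a modest improvement in clarity but not a different idea.
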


\begin{proof}
The claim holds for $k=0$ since $\SV_0 = V$, $\SV'_0 = V \setminus V_T$, and $V_T(0) = V_T$. Let $k + 1 \leq n$ be minimal such that $\SV_{k+1} \neq \SV'_{k+1} \cup V_T(k+1)$. By definition, $V_T(k+1)$ agrees with $\SV_{k+1}$ on 
the $V_T$ portion of $V$. It must be the case then that $\SV_{k+1} \setminus V_T(k+1) \neq \SV'_{k+1}$. That is, $\SV_{k+1} \setminus V_T(k+1)$ and $\SV'_{k+1}$ represent two different partitions of $V \setminus V_T$. However, by the minimality of $k+1$, $\SV_{k} \setminus V_T(k) = \SV'_{k}$. Note that $\SV'_{k+1}$ is completely determined by the dominance relations between members of $\SV'_k$. Likewise, since $V_T \in \SV_n$, none of the members of $\SV'_k$ is in a dominance relation with any member of $V_T(k)$ and thus $\SV'_{k+1} \setminus V_T(k+1)$ is also determined by the dominance relations between members of $\SV'(k)$. Thus $\SV_{k+1} \setminus V_T(k+1) = \SV'_{k+1}$, a contradiction.
\end{proof}

We will be done once we specify all ways in which we can assign signatures to the directed edges from $\rt{T}$ to the vertices of $M'$. In particular, a choice of signatures corresponds exactly to a partition of the vertices of $M'$. What then are the necessary and sufficient conditions on this partition to produce a valid choice of signatures? Lemma \ref{L:TREE_MAKING} answers this question in much the same way as Lemma \ref{L:DEFINING_DOMINANCE} did for the signature configuration problem.

\begin{lemma}\label{L:TREE_MAKING}
For $(T = (V,E,A), \rho, \sim) \in \ST$ and $\{u,v\} \in E$, if $\rho(\{u, v\}) \not\subseteq \tc{T}$, then $\Sigma(\rt{T}, u) = \Sigma(\rt{T}, v)$.
\end{lemma}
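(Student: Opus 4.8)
The plan is to reduce the statement to a single application of Lemma~\ref{L:ABC}(2), once the relevant dominance data has been pinned down. Write $r := \rt{T}$. Since $\{u,v\}$ is an undirected edge of $T$, the vertices $u$ and $v$ are incomparable in the tree order on $T$, so neither equals the root; hence $r,u,v$ are three distinct, and therefore pairwise disjoint, members of $\SV_n$. Because $T$ is a rooted tree, there are directed edges $(r,u),(r,v)\in\SA_n$, i.e.\ $\dom{r}{u}$ and $\dom{r}{v}$ in $G$, while $\{u,v\}\in\SE_n$, so $\ec{uv}$ is a single colour $C$.

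The first real step is to determine the colours on $ru$ and $rv$. By Theorem~\ref{T:DECOMP} every $H\in\SG_r^+$ has the tree property for all $k\ge 0$; in particular $H$ has the tree property for $n-1$, so Lemma~\ref{L:DOM_COLORS} applied to $(r,u),(r,v)\in\SA_n$ gives $\ec{ru}=\ec{rv}=\tc{r}=:\{A,B\}$ with $A\neq B$, and $\tc{T}=\tc{r}=\{A,B\}$ by the definition of $\tc{T}$. (The case $n=0$ is vacuous: a weak component of $M_0(H)$ is either a single vertex or a uniformly coloured clique, so any two of its vertices are joined by exactly the two colours $\tc{T}$ — never by a single colour outside $\tc{T}$ — and hence there is no undirected edge $\{u,v\}$ inside it with $\rho(\{u,v\})\not\subseteq\tc{T}$.) The hypothesis $\rho(\{u,v\})\not\subseteq\tc{T}$ now says precisely $C\notin\{A,B\}$.

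Finally, apply Lemma~\ref{L:ABC}(2) with $(U,V,W)=(r,u,v)$: the hypotheses $\dom{r}{u}$, $\ec{ru}=\{A,B\}$, the disjointness of $r,u,v$, and $\ec{uv}=C\notin\{A,B\}$ all hold, so its conclusion yields either $C\in\ec{rv}$ or else $\dom{r}{v}$ together with $\Sigma(r,u)=\Sigma(r,v)$. Since $\ec{rv}=\{A,B\}$ does not contain $C$, the first alternative is impossible, and we are left with $\Sigma(\rt{T},u)=\Sigma(r,v)=\Sigma(\rt{T},v)$, as required. I do not expect a serious obstacle here: the lemma is essentially a corollary of Lemmas~\ref{L:ABC} and~\ref{L:DOM_COLORS}. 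The only points that need care are the bookkeeping of levels (invoking Lemma~\ref{L:DOM_COLORS} at level $n-1$ and quoting Theorem~\ref{T:DECOMP} to know $H$ has the tree property), dispatching the degenerate base case $n=0$, and confirming that $\ec{uv}$ is a genuine third colour so that Lemma~\ref{L:ABC}(2) applies.
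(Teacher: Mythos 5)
Your proposal is correct, and its logical skeleton is the same as the paper's: the only thing at stake is the triangle $\rt{T},u,v$ together with the fact that the root dominates $u$ and $v$ with exactly the colors $\tc{T}$, so that the color $C$ on $\{u,v\}$ is a genuine third color. The difference is in packaging. The paper disposes of the lemma in two lines by direct contradiction: if $\Sigma(\rt{T},u)\neq\Sigma(\rt{T},v)$, pick a witness in $\rt{T}$ and vertices of $u,v$ carrying $C$ to exhibit a rainbow triangle colored $A,B,C$; it leaves implicit that $\ec{\rt{T}u}=\ec{\rt{T}v}=\tc{T}$ and that $C\notin\tc{T}$ is therefore a third color. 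You instead make those facts explicit (Theorem~\ref{T:DECOMP} gives the tree property, Lemma~\ref{L:DOM_COLORS} gives $\ec{\rt{T}u}=\ec{\rt{T}v}=\tc{T}$ with $|\tc{T}|=2$), note that $u,v$ are incomparable so $\rt{T},u,v$ are disjoint members of $\SV_n$, dispatch the vacuous $n=0$ case, and then quote Lemma~\ref{L:ABC}(2) with $(U,V,W)=(\rt{T},u,v)$; since $C\notin\ec{\rt{T}v}$, the only surviving alternative is $\Sigma(\rt{T},u)=\Sigma(\rt{T},v)$. This buys a fully bookkept derivation from the earlier machinery (Lemma~\ref{L:ABC}(2) is itself proved by exactly the rainbow-triangle argument the paper uses here), at the cost of invoking heavier tools for what the paper treats as an immediate observation; either route is sound.
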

\begin{proof}
Suppose $\tc{T} = \{A,B\}$ and $C \in \rho(\{u,v\}) \setminus \{A,B\}$.  If $\Sigma(\rt{T}, u) \neq \Sigma(\rt{T}, v)$, then we may form a rainbow triangle by selecting $A$ for $(\rt{T}, u)$, $B$ for $(\rt{T}, v)$, and $C$ for $\{u,v\}$.
\end{proof}

\begin{construction}[Tree Construction: $\Delta_T$]\label{C:TREES}
For $\SN \subseteq \SM^\ast$, fix $(M = (V,E,A), \rho, \sim_\Sigma) \in \SN$ and colors $\{A,B\}$. Let $V = \cup V_i$ be the vertex partition of $M$ into weak components. Let $\rt{T}$ be a new vertex and form $(T := (V \cup \rt{T}, E', A'), \rho', \sim_{\Sigma^\prime})$ as follows:

\benum
\item $E' := E$;
\item $A' := A \cup (\{\rt{T}\} \times \{V\})$;
\item $\rho'(e) := \{A,B\}$ for all $e \in \{\rt{T}\} \times \{V\}$ and $\rho' \restrict E \cup A := \rho$;
\item Define $\sim_{\Sigma^\prime}$ on $A'$ by
\benum
	\item $(\rt{T}, u) \sim_{\Sigma^\prime} (\rt{T}, v)$ whenever $(u,v) \in A$,
	\item if $\ec{V_i V_j} \not\subseteq \{A,B\}$, then $(\rt{T}, u) \sim_{\Sigma^\prime} (\rt{T}, v)$ for all $u \in V_i, v \in V_j$,
	\item if $(u,v_1) \sim_\Sigma (u,v_2)$, then $(u,v_1) \sim_{\Sigma^\prime} (u,v_2)$ for all $u,v_i \in V$.
\eenum
\eenum

\end{construction}

We write $\Delta_T(\SN)$ for the family of mixed graphs resulting from all possible choices of $(M, \rho, \sim_\Sigma) \in \SN$ and possible definitions of $\sim_\Sigma$ in step (4).

\begin{theorem}\label{T:TREE_CONSTRUCTION_WORKS}
$\ST(n+1) \subseteq \Delta_T(\SM(n)) \subseteq \Delta_T(\SM^\ast) \subseteq \ST^\ast$.
\end{theorem}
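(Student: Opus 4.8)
The plan is to establish the three displayed inclusions in turn; the middle one is formal, the first follows from the pruning lemma, and the third is where the work lies.

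\emph{The middle inclusion} $\Delta_T(\SM(n)) \subseteq \Delta_T(\SM^\ast)$ is immediate: Lemma \ref{GAMMA_WORKS} gives $\SM \subseteq \SM^\ast$, hence $\SM(n) \subseteq \SM^\ast$, and $\Delta_T$ is monotone in its input family (starting from more mixed graphs produces at least as many trees).

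\emph{First inclusion} $\ST(n+1) \subseteq \Delta_T(\SM(n))$. Given a rooted tree $T \in \SM$ of size at most $n+1$, I would realize it as $\Delta_T$ applied to $T$ with its root removed. Write $T = (M_k(G_0), \rho, \sim_\Sigma)$ with $G_0 \in \SG_r^+$, put $r := \rt{T} \in \SV_k$ (a subset of $V(G_0)$), and let $H_0$ be the subgraph of $G_0$ induced on $V(G_0)\setminus r$. Since $\SV_k$ is a partition, $r$ is the only member of $\SV_k$ contained in $r$, so Lemma \ref{L:PRUNED_TREES} (taking its $V_T := r$ at level $k$) yields $\SV_k(H_0) = \SV_k\setminus\{r\}$; thus $M := (M_k(H_0),\rho,\sim_\Sigma)$ lies in $\SM$, has at most $n$ vertices, and its vertex set consists of the non-root vertices of $T$. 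Because dominance and the colour $\ec{UV}$ of a pair of blocks depend only on the induced colouring, the directed and undirected edges of $M$ are exactly those of $T$ not meeting $r$, so the weak components of $M$ are precisely the root-subtrees of $T$. It remains to verify $T = \Delta_T(M)$ for a suitable choice in step (4), with the construction's new root identified with $r$ and its colour pair taken to be $\tc{T}$ (the common value $\ec{rU}$, well defined by Lemma \ref{L:DOM_COLORS}): the directed edges $r\to U$ with list-colour $\tc{T}$ are exactly those $\Delta_T$ adjoins; every undirected edge of $T$ avoids $r$ and so coincides with the undirected edges of $M$; and $\sim_\Sigma$ on $T$ is a permissible choice in step (4), since clause (4a) is property (4) of Theorem \ref{T:DECOMP}, clause (4b) is Lemma \ref{L:TREE_MAKING}, and clause (4c) holds because $\sim_\Sigma$ restricts on the old edges to that of $M$. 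Hence $T \in \Delta_T(\SM(n))$.

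\emph{Third inclusion} $\Delta_T(\SM^\ast) \subseteq \ST^\ast$. Fix $(M=(V,E,A),\rho,\sim_\Sigma)\in\SM^\ast$ and let $(T,\rho',\sim_{\Sigma'}) := \Delta_T(M)$, built with a new root $\rt{T}$ and colour pair $\{A,B\}$. First, $T$ is genuinely a rooted tree: adjoining the edges $\{\rt{T}\}\times V$ to the weak components of $M$ makes the directed part transitive with a tree transitive-reduction rooted at $\rt{T}$. So it suffices to show $T\in\SM^\ast$, i.e. every $(G,\rho_G)\in\Gamma((T,\rho',\sim_{\Sigma'}))$ is Gallai. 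Realize $G$ as the blow-up of $T$ by uniformly coloured Gallai multigraphs $\{G_w : w\in V(T)\}$ and classify a triangle of $G$ by how its three vertices meet the blocks: (i) all in one block, which is non-rainbow since $G_w$ is Gallai; (ii) all in blocks over $V$ with none in $G_{\rt{T}}$, in which case the sub-multigraph of $G$ on $\bigcup_{u\in V}V_u$ is a member of $\Gamma((M,\rho,\sim_\Sigma))$ — restricting $\rho'$ and $\sim_{\Sigma'}$ to $E\cup A$ returns $\rho$ and $\sim_\Sigma$, and the block-size constraints coincide since $\rt{T}$ is the only new source — hence Gallai because $M\in\SM^\ast$; and (iii) one vertex $x$ in $G_{\rt{T}}$ and the other two in blocks $V_u,V_v$. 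For (iii) I would use that $\rt{T}$ dominates every vertex of $M$ with the single colour pair $\{A,B\}$: if $u=v$, the two edges from $x$ to $V_u$ carry the same colour set, so the triangle repeats a colour; if $u\ne v$, the triple $\rt{T},u,v$ is one of the configurations of Figure \ref{TRIANGLE_CASES}, and the identifications put into $\sim_{\Sigma'}$ in step (4) — clause (4a) when $\dom{u}{v}$ (Theorem \ref{T:DECOMP}(4)) and clause (4b) when $\ec{uv}\not\subseteq\{A,B\}$ (Lemma \ref{L:TREE_MAKING}) — are exactly what the argument of Lemma \ref{L:TRIANGLE_CASES} needs in order to force the colours on $xy$ and $xz$ to agree or to force $\ec{yz}$ into $\{A,B\}$, so no rainbow triangle appears. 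Thus $\Gamma((T,\rho',\sim_{\Sigma'}))\subseteq\SG$ and $T\in\ST^\ast$.

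The content is concentrated in the case-(iii) analysis of the third inclusion: one must check that clauses (4a)–(4b) of Construction \ref{C:TREES} encode precisely the signature agreements that prevent rainbow triangles through the new root, given that every block is joined to $G_{\rt{T}}$ by the one colour pair $\{A,B\}$ common to the whole tree — which is exactly why Lemma \ref{L:TREE_MAKING} and property (4) of Theorem \ref{T:DECOMP} were isolated beforehand and why the edges out of $\rt{T}$ were given that two-element list. The first inclusion is routine once Lemma \ref{L:PRUNED_TREES} is available, and the middle inclusion is purely formal.
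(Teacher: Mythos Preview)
Your approach mirrors the paper's: the same use of Lemma~\ref{L:PRUNED_TREES} for the first inclusion (with more detail than the paper gives on why $\sim_\Sigma$ satisfies clauses (4a)--(4c), correctly invoking property~(4) of Theorem~\ref{T:DECOMP} and Lemma~\ref{L:TREE_MAKING}), the same triviality for the middle inclusion, and the same block-by-block triangle analysis for the third. One case is missing from your classification in the third inclusion, however: a triangle with \emph{two} vertices in $G_{\rt{T}}$ and one in some $V_u$ with $u\in V$. Your cases (i)--(iii) cover zero, one, or three vertices in the root block, but not two; this is exactly the paper's Case~(A) in Figure~\ref{TREE_CASES}. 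It is the easiest case --- since $\rt{T}$ has an outgoing directed edge, step~(1) of Construction~\ref{C:GAMMA} forces $G_{\rt{T}}$ to be uniformly coloured in $\{A,B\}$, and the two edges to $V_u$ also lie in $\{A,B\}$ --- but it does need to be stated for the classification to be exhaustive.

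A smaller remark on presentation: your appeal to ``the argument of Lemma~\ref{L:TRIANGLE_CASES}'' in case~(iii) is loose, since that lemma's proof works by selecting vertices in an underlying Gallai multigraph that realise prescribed colours, and no such underlying multigraph is available for a general $T\in\Delta_T(\SM^\ast)$. Fortunately your actual reasoning does not need it: clause~(4a) forces $\Sigma(V_{\rt{T}},V_u)=\Sigma(V_{\rt{T}},V_v)$ when $(u,v)\in A$, clause~(4b) forces the same when $\ec{uv}\not\subseteq\{A,B\}$ (combined with (4a) to pass from one pair to all pairs across the two components), and in the remaining case all three edges of the triangle already lie in $\{A,B\}$. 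That argument is self-contained and matches what the paper does in its Cases~(B)--(E).
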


\begin{proof}
Since $\SM(n) \subseteq \SM^\ast$, we have $\Delta_T(\SM(n)) \subseteq \Delta_T(\SM^\ast)$.  We now show that $\Delta_T(\SM^\ast) \subseteq \ST^\ast$. Fix $M \in \SM^\ast$, $T \in \Delta_T(M)$, and $G \in \Gamma(T)$.

By construction, it is clear that $T$ is a tree with root $\rt{T}$ so we only need to verify $T \in \ST^\ast$, i.e. $G \in \SG$. Since $M \in \SM^\ast$, the portion of $G$ corresponding to $M$, that is $G$ with $\rt{T}$ removed, will certainly lack rainbow triangles. Likewise, if a triangle falls entirely in the portion of $G$ corresponding to $\rt{T}$, then it will have at most two colors. We thus only need to consider the general types of triangles presented in Figure \ref{TREE_CASES}.
\showpic{4.5}{Tree_Cases}{}{TREE_CASES}

Case (A) cannot lead to a rainbow triangle since $\ec{\rt{T} \rt{T}} \subseteq \tc{T} = \ec{\rt{T} U}$. Likewise, in case (B), since $\dom{\rt{T}}{U}$, the vertex in $\rt{T}$ must be connected to $U$ by a single color. Cases (C) and (D) cannot contain a rainbow triangle since $\Sigma(\rt{T}, U) = \Sigma(\rt{T}, V)$.

Finally, by construction, if $\rho(U,V) \not\subseteq \tc{T}$, then $\Sigma(\rt{T},U) = \Sigma(\rt{T},V)$. Since in case (E), $\Sigma(\rt{T},U) \neq \Sigma(\rt{T},V)$ it must be the case that $\rho(U,V) \subseteq \tc{T}$ and thus the figure cannot contain a rainbow triangle. We have thus shown that $\Delta_T(\SM^\ast) \subseteq \ST^\ast$.

We now show $\ST(n+1) \subseteq \Delta_T(\SM(n))$. Fix $T \in \ST(n+1)$. Since $\ST(n+1) \subseteq \SM$, $T = M_k(H)$ for some $H \in \SG_r ^+$. Let $H'$ be the subgraph of $H$ induced by removing the vertices corresponding to $\rt{T}$. Again, $H' \in \SG_r ^+$ and thus $M_k(H') \in \SM$. By Lemma \ref{L:PRUNED_TREES} however, $M_k(H')$ will have one less vertex than $T$ and thus $M_k(H') \in \SM(n)$. Since $T \in \Delta_T(M_k(H')) \subseteq \Delta_T(\SM(n))$, we have $\ST(n+1) \subseteq \Delta_T(\SM(n))$.
\end{proof}

\subsection{Proof of Theorem \ref{T:CONSTRUCTION}}

We may now show that $\SM \subseteq \SM^\prime \subseteq \SM^\ast$.

\begin{proof}
It is clear that $\SM \subseteq \SM^\prime$.

Certainly $\SM_0 \subseteq \SM^\ast$ since $\SM_0$ is just the family of Gallai graphs. Now consider $\SM_{n+1} = \Delta_T(\SM_n) \cup \Delta_F(\SM_n)$ where $\SM_n \subseteq \SM^\ast$. Theorem \ref{T:TREE_CONSTRUCTION_WORKS} shows that $\Delta_T(\SM_n) \subseteq \SM^\ast$.

Fix $(M = (V,E,A), \rho, \sim_\Sigma) \in \Delta_F(\SM_n)$ and $(G, \rho) \in \Gamma(M)$. Recall that $M$ was constructed by replacing vertices of $M' \in \SM_{n}$ with trees $\{T_i\} \subseteq \SM_n$ and joining these trees subject to certain constraints. Recall further than $G$ was formed by replacing the vertices of $M$, which as just mentioned can be thought of as the vertices of the collection $\{T_i\}$, with complete edge-colored multigraphs, say $\{G_i\} \in \SG$.

Now consider a triangle $u,v,w$ in $G$. Let $u$ fall in the portion of $G$ corresponding to $T_u$, $v$ in $T_v$, and $w$ in $T_w$. Up to relabeling, we may then assume we are in one of the following cases:

\benum
\item $T_u = T_v = T_w$,
\item $T_u, T_v, \AND T_w$ are distinct,
\item $T_v = T_w$ and $\dom{T_u}{T_v}$, or
\item $T_u = T_v$ and $\dom{T_u}{T_w}$.
\eenum

Cases (1) and (2) are resolved by the facts that $T_u \in \SM^\ast$ and $M' \in \SM^\ast$, respectively. Case (3) is straightforward since $\Sigma(T_u, T_v) = \Sigma(T_u, v) = \Sigma(T_u, w)$ and thus $\ec{uv} = \ec{uw}$.

Case (4) requires some consideration of $\Sigma(T_u, T_w)$. If it happens that $\Sigma(T_u, T_w)(u) = \Sigma(T_u, T_w)(v)$, then $\ec{uw} = \ec{vw}$ and we are done. Suppose then that $\Sigma(T_u, T_w)(u) \neq \Sigma(T_u, T_w)(v)$. By construction, this implies that $u$ and $v$ are not connected by a path in $T_u$ whose colors fall outside $\tc{T_u}$. In particular, $\ec{uv} \subseteq \tc{T_u}$. Since we also know that $\ec{uw}, \ec{vw} \subseteq \tc{T_u}$ and $|\tc{T_u}| = 2$, we are done.

\end{proof}

\section{Open Problems}\label{OPEN_PROBLEMS}

We approached the topic of Gallai multigraphs in an attempt to generalize the construction of Gallai graphs for graphs lacking rainbow $4$-cycles. We remain interested in this problem and in the following somewhat more general questions suggested by the multigraph perspective:

\benum
\item Can $\Delta$ be generalized to construct finite multigraphs lacking rainbow $n$-cycles for a fixed $n$?
\item Is there a construction of all finite (not necessarily complete) graphs or multigraphs lacking rainbow triangles?
\eenum

\section{Acknowledgments}
I would like to thank Rick Ball and Petr Vojt\u{e}chovsk\'y for suggesting to me the topic of Gallai graphs, and Petr for his many helpful comments. I would also like to thank Peter Keevash for informing me of \cite{Mubayi}.

\end{document}